\documentclass[12pt]{article}

\usepackage[utf8]{inputenc}

\usepackage{ graphicx, amsmath, amsthm, color, psfrag, amsfonts, natbib,
  url, amssymb, bm, thmtools, mathtools, fdsymbol }

\addtolength{\textheight}{1.4in}
\addtolength{\topmargin}{-1.0in}
\addtolength{\textwidth}{1.356in}
\addtolength{\evensidemargin}{-0.69in}
\addtolength{\oddsidemargin}{-0.69in}

\setlength{\parskip}{0.1in}
\setlength{\parindent}{0.0in}

\makeatletter
\renewcommand*{\@fnsymbol}[1]{\ensuremath{\ifcase#1\or *\or \dagger\or \ddagger\or
   \mathsection\or \mathparagraph\or \|\or **\or \dagger\dagger
   \or \ddagger\ddagger \else\@ctrerr\fi}}
\makeatother

\declaretheorem[ style = definition, numbered = yes ]{definition}
\declaretheorem[ style = proposition, numbered = yes ]{proposition}
\declaretheorem[ style = theorem, numbered = yes ]{theorem}
\declaretheorem[ style = lemma, numbered = yes ]{lemma}

\newcommand{\given}{\, | \,}

\newcounter{remark}
\newcommand{\remark}{%
  \stepcounter{remark}%
  \theremark}

\newcounter{example}
\newcommand{\example}{%
  \stepcounter{example}%
  \theexample}

\newcounter{algorithm}

\title{\mbox{} \vspace*{-1.0in} \\ \textbf{A Simple Necessary Condition \\ For Independence of Real-Valued \\ Random Variables}}

\author{\Large \textbf{David Draper\footnote{\textit{Address for correspondence}: David Draper, Department of Statistics, Baskin School of Engineering, University of California, 1156 High Street, Santa Cruz CA 95064 USA; email address \texttt{<draper@ucsc.edu>}. Additional email addresses: Erdong Guo$^\spadesuit$ \texttt{<eguo1@ucsc.edu>, Robert Lund$^\varheartsuit$ \texttt{<rolund@ucsc.edu>}, and Jon Woody$^\blacklozenge$ \texttt{<jwoody@math.msstate.edu>}}.} , Erdong Guo$^\spadesuit$, Robert Lund$^\varheartsuit$,} \vspace*{0.0125in} \\ \Large and \textbf{Jon Woody$^\blacklozenge$}}

\date{\textit{University of California, Santa Cruz (DD, EG, RL)} \\ \textit{Mississippi State University (JW)} \vspace*{0.2in} \\ 27 Nov 2021 \vspace*{-0.5in}}

\bibliographystyle{chicago}

\begin{document}

\maketitle

\begin{abstract}

\noindent
The standard method to check for independence of two real-valued random variables --- demonstrating that the bivariate joint distribution factors into the product of its marginals --- is both necessary and sufficient. Here we present a simple \textit{necessary} condition based on the \textit{support sets} of the random variables, which --- if not satisfied --- avoids the need to extract the marginals from the joint in demonstrating dependence. We review, in an accessible manner, the measure-theoretic, topological, and probabilistic details necessary to establish the background for the old and new ideas presented here. We prove our result in both the discrete case (where the basic ideas emerge in a simple setting), the continuous case (where serious complications emerge), and for general real-valued random variables, and we illustrate the use of our condition in three simple examples.

\bigskip

\noindent
\textbf{\textit{Keywords:}} Absolutely continuous CDF, \textit{amiable} PMF, Borel sets in $\mathbb{ R }^k$,  \textit{canonical} PDF version, closure of a set, continuous probability density function (PDF), cumulative distribution function (CDF), discrete probability mass function (PMF), IID (independent identically distributed) sampling, limit point of a set, Lebesgue measure in $\mathbb{ R }^k$, metric space, point of increase of a CDF,  probability space, SRS (simple random sampling), singular distribution, support set, topology of $\mathbb{ R }^k$, \textit{version} of a \textit{collection} of PDFs \textit{possessed} by an absolutely continuous CDF.

\end{abstract}

\section{Introduction} \label{s:introduction-1}

\textit{Independence} of two real-valued random variables $X$ and $Y$ is a bedrock idea in probability theory and statistical data science. The usual approach to checking for independence involves seeing whether the joint distribution factors into the product of the marginal distributions, which requires extracting the marginals from the joint. Here we offer a simple necessary condition for independence based instead on seeing whether the bivariate and marginal \textit{support sets} factor, which --- if they do not --- obviates the necessity to compute the marginal distributions.

The plan of the paper is as follows. In Section \ref{s:intuition-1} we present an intuitive summary of our main result, in the form of a relevant example. Section \ref{s:notation-definitions-preliminary-1} introduces notation, definitions, and preliminary results from the literature, including basic ideas from measure theory and topology. In Section \ref{s:discrete-case-1} we state and prove our result in the case of discrete real-valued random variables; Section \ref{s:continuous-case-1} provides parallel results in the continuous case. In Section \ref{s:general-case-1} we tell our story for general real-valued random variables. Section \ref{s:examples-1} offers several examples, and in Section \ref{s:discussion-1} we conclude the paper with a brief discussion.

Before beginning our main story, we note (prompted by Terenin (2021, personal communication)) that it's possible to prove our main result at an extremely high level of abstraction, but we've found that this obscures a number of details, relevant to practical data science, when working with random variables with values in $\mathbb{ R }^1$ and $\mathbb{ R }^2$; interested readers will find a more abstract proof sketch in the Appendix.

\section{An intuitive summary of our main result} \label{s:intuition-1}

We introduce our main finding intuitively in the context of the following example. \vspace*{-0.1in}

\begin{quote}

\textbf{Example \example.} Consider a darts player whose throws land on or inside a circle (except when they land outside the circle, in which case they're rejected with no penalty to the player); without loss of generality we can base our modeling of this situation on the unit circle in the real plane. Prior to the next throw (knowing nothing about previous throws, if any), we're uncertain about the Cartesian coordinates $( x, y )$ identifying where the dart will land, so as usual we can create a (continuous) bivariate random vector $\bm{ W } = ( X, Y )$ to quantify our uncertainty. Are the component random variables $X$ and $Y$ independent?

The standard method for answering this question involves (a) extracting the marginal probability density functions (PDFs) $f_X ( x )$ and $f_Y ( y )$ from the joint PDF $f_{ XY } ( x, y )$ and (b) seeing if the joint factors as the product of the marginals. This is not possible here, with the information given: at present we know nothing about the skill of the darts player, i.e., $f_{ XY } ( x, y )$ is not uniquely specified by problem context. However, if we add the assumption that \textit{all} points on or inside the circle are realizable places for the dart to land, it's immediate from the main result of this paper that $X$ and $Y$ are \textit{dependent}, even without any further knowledge about the joint PDF.

Our approach is based, not on the PDFs, but on the \textit{support sets} of $X$, $Y$, and $( X, Y )$: intuitively these are the \textit{nontrivial} subsets of the real line (for $X$ and $Y$) and the real plane (for $( X, Y )$), in the sense that (in the continuous case) they identify the values of the random variables with positive density. Denoting the relevant support sets here by $S_X$, $S_Y$, and $S_{ XY }$, respectively, our main result provides a \textit{necessary} condition for independence: if $X$ and $Y$ are independent, \textit{the support sets must factor}: $S_{ XY } = S_X \times S_Y$, in which $A \times B$ denotes the Cartesian product of the sets $A$ and $B$. In this example, evidently $S_{ XY }$ is all of the points in the real plane on or inside the unit circle and $S_X = S_Y = [ -1, 1 ]$, so that $S_X \times S_Y$ is the square that circumscribes the unit circle (a larger set than $S_{ XY }$); thus $X$ and $Y$ must be dependent. 

To see what's going on, consider two cases of this setting in which more is assumed about the bivariate PDF. The left panel of Figure \ref{f:intuitive-plots-1} presents both a contour plot of the Uniform PDF \{$f_{ XY } ( x, y ) = \frac{ 1 }{ \pi }$ for $( x, y )$ such that $( x^2 + y^2 ) \le 1$ and 0 otherwise\} (illustrating the repeated outcomes generated by a completely inept darts player) and a visualization of the support sets $S_{ XY }$ (the dark blue circle, together with the red circular boundary), $S_{ X }$ (the solid horizontal red line segment from $[ -1, -1 ]$ to $[ 1, -1 ]$), and $S_{ Y }$ (the solid vertical red line segment from $[ -1, -1 ]$ to $[ -1, 1 ]$); the light blue area represents the discrepancy between $S_{ XY }$ and $( S_X \times S_Y )$ in this case. The right panel of the figure gives a perspective plot of what may be termed a \textit{Roman Colosseum} PDF (this one has PDF \{$f_{ XY } ( x, y ) = \frac{ 2 }{ \pi } ( x^2 + y^2 )$ for $( x, y )$ such that $( x^2 + y^2 ) \le 1$ and 0 otherwise\}); this is more visually interesting than the Uniform PDF in the left panel and represents the repeated results of a highly skilled darts player who is rewarded for landing darts as close to the unit circle as possible.

\end{quote}

\begin{figure}[t!]

\centering

\caption{\textit{Left panel: contour plot of a Uniform PDF on the unit circle, with support sets indicated in red; right panel: perspective plot of a \textbf{Roman Colosseum} PDF (see text; higher PDF values in yellow)}.}

\label{f:intuitive-plots-1}

\begin{tabular}{cc}

\begin{tabular}{c}

\includegraphics[ scale = 0.4 ]{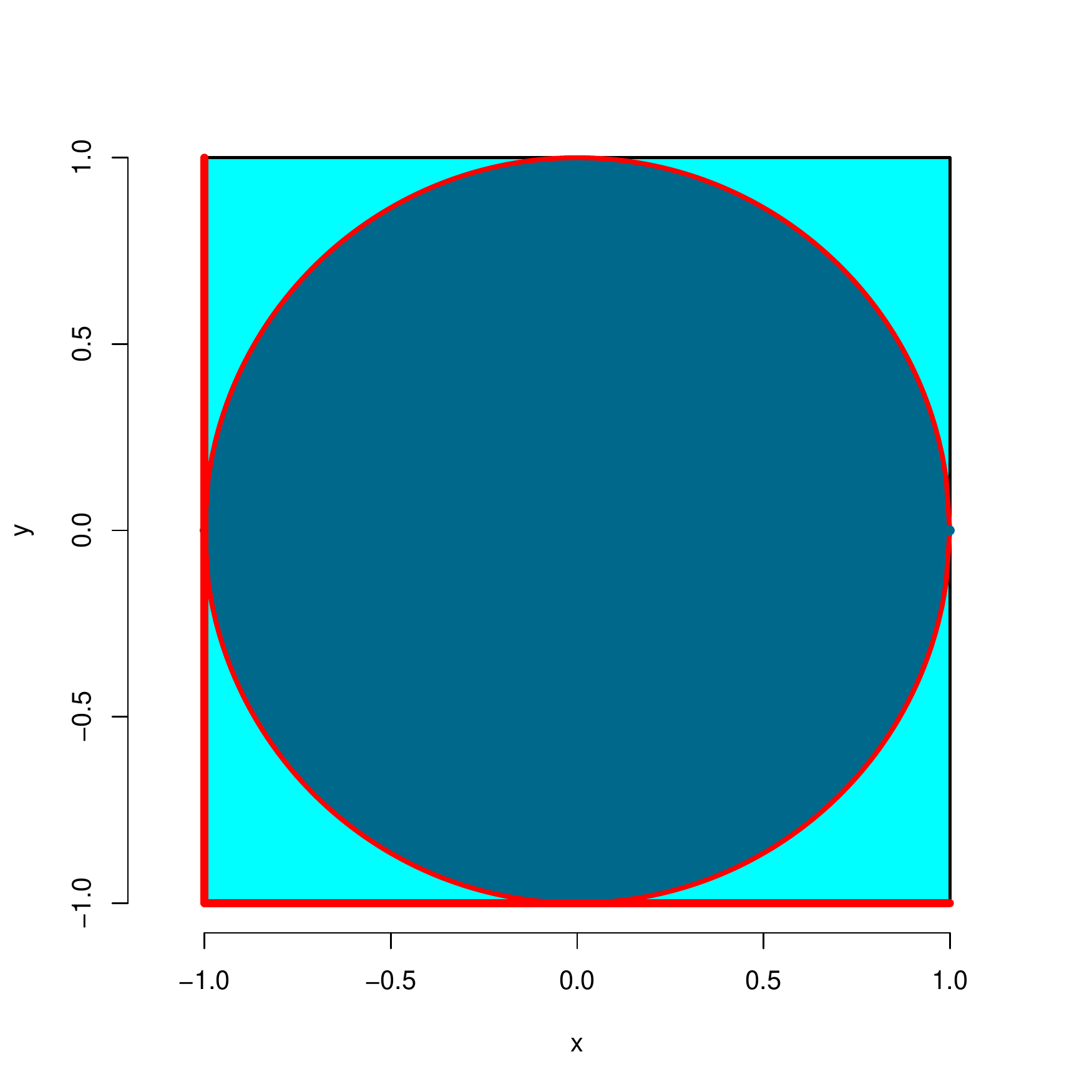} \\

\vspace*{0.3in}

\end{tabular}

&

\begin{tabular}{c}

\mbox{ } \vspace*{-0.3in} \\

\hspace*{-0.4in} \includegraphics[ scale = 0.3 ]{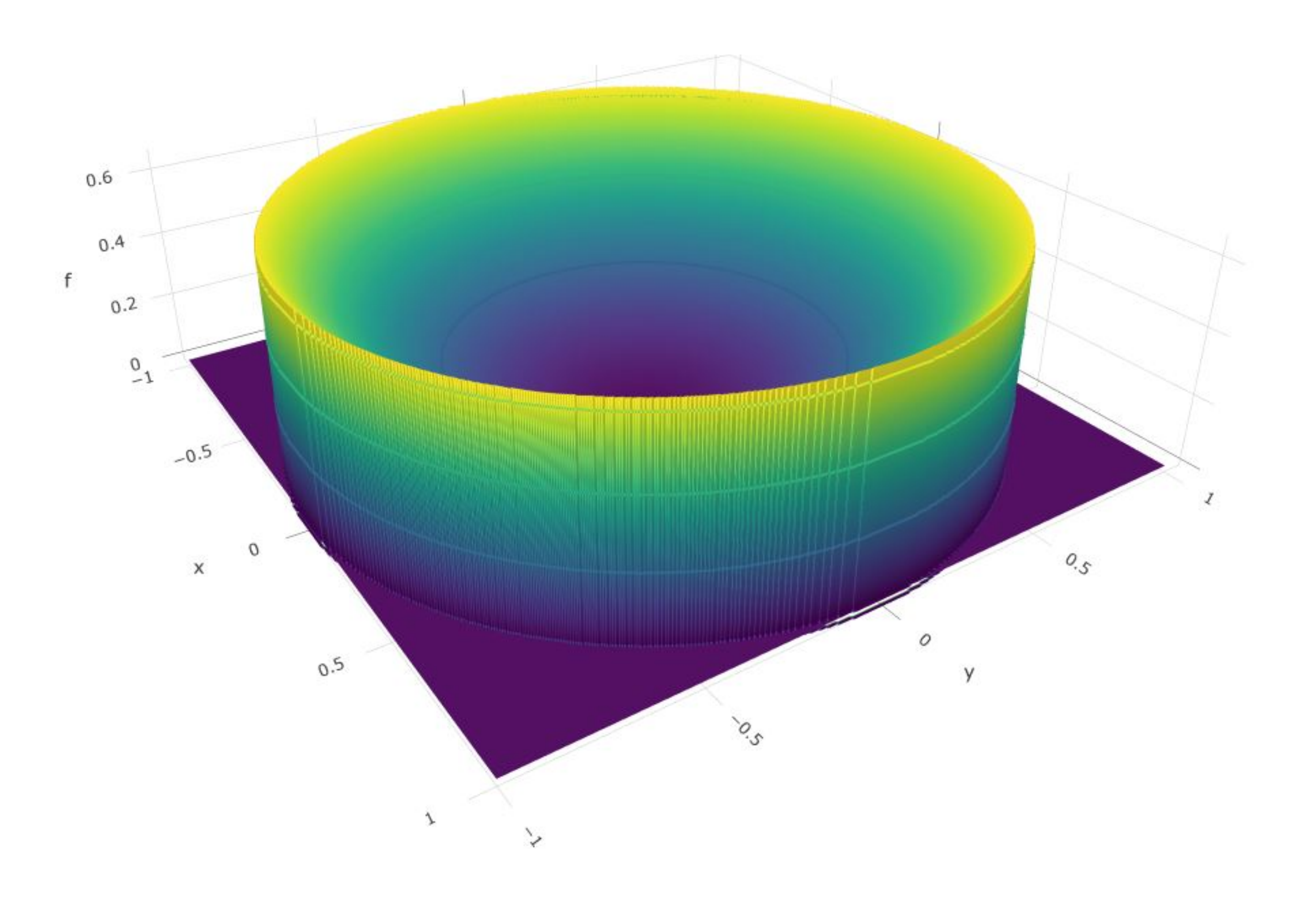}

\end{tabular}

\end{tabular}

\mbox{ } \\ \vspace*{-0.5in}

\end{figure}

Our main result is easy to state, but proving it in full generality turns out to involve engaging with several technical challenges: 
\begin{itemize}

\item

We need to be mindful of \textit{measure-theoretic} considerations, because (a) the general definition of independence for real-valued random variables involves measure theory and (b) PDFs of real-valued random variables are not uniquely defined (e.g., you can \textit{punch a hole} in the standard Normal PDF $\phi ( x ) = \frac{ 1 }{ \sqrt{ 2 \, \pi } } \exp \left( - \frac{ x^2 }{ 2 } \right)$ at any $x$ you like and replace its value there with, e.g., 0 without changing the probabilistic character of $\phi$); and 

\item

We need to be careful with our \textit{topological} details, because --- to work properly --- support sets need to be \textit{closed} (a key property of some, but not all, subsets of the real line and real plane). 

\end{itemize}

Along the way we'll encounter and cope with several nasty counterexamples to ordinary intuition, and we'll need to create several new definitions of properties shared by some, but not all, real-valued random variables, to make the theory match up with standard treatments of independence and support in (a) textbooks in probability and (b) papers in statistical data science.

\section{Notation, definitions, and preliminary results} \label{s:notation-definitions-preliminary-1}

\subsection{The probability context} \label{s:probability-context-1}

With $k$ as a finite positive integer, let $\mathcal{ S } = ( \Omega, \mathcal{ F }, P ) \triangleq ( \mathbb{ R }^k, \mathbb{ B }^k, P )$ be the \textit{probability space} (\cite{kolmogorov-1933}; e.g., \cite{breiman-1992}; throughout the paper the symbol $\triangleq$ means \textbf{\textit{is defined to be}}) in which 
\begin{itemize}

\item

the \textit{sample space} is $\Omega = \mathbb{ R }^k$; 

\item

the \textit{$\sigma$-algebra} is $\mathcal{ F } = \mathbb{ B }^k$, the Borel $\sigma$-field on $\mathbb{ R }^k$; and 

\item

$P \! : B \rightarrow [ 0, 1 ]$ is a \textit{probability measure}, in which $B$ is a set contained in $\mathbb{ B }^k$. 

\end{itemize}
This space is natural for working with real-valued random vectors $\bm{ X } = ( X_1, \, \dots, \, X_k )$ taking on values of the form $\bm{ x } = ( x_1, \, \dots, \, x_k )$, because (for $i = 1, \, \dots, \, k$) $X_i ( \bm{ x } ) = x_i$ simply picks out coordinate $i$ of the random vector. As special cases of this general notation in what follows, when $k = 1$ we work with single random variables with names such as $X$ and $Y$, and when $k = 2$ we use the notation $\bm{ W } = ( X, Y )$. As is customary, we write expressions such as $P ( a \le X \le b )$ as shorthand for $P \big[ x \! : a \le X ( x ) = x \le b \big]$. 

\begin{quote}

\textbf{Remark \remark.} Since we work here solely with the probability space $\mathcal{ S }$ defined above, to avoid repetition, in this paper (a) \textit{all mentions of the phrase \textbf{random variable} are abbreviations for the phrase \textbf{real-valued random variable}}, and (b) $k$ always represents the dimension of the real space under current consideration and is therefore always a finite positive integer.

\end{quote}

The Borel sets of particular interest to us are 
\begin{itemize}

\item

\textit{Neighborhoods} of a point in $\mathbb{ R }^k$: \textit{open balls} ($k$-spheres) $\mathcal{ B } ( \bm{ x }, \epsilon ) \triangleq \{ \bm{ x }^* \in \mathbb{ R }^k \! : || \bm{ x }^* - \bm{ x } || < \epsilon \}$ of radius $\epsilon > 0$ centered at $\bm{ x }$ (in which $\mathcal{ D } ( \bm{ x }^*, \bm{ x } ) \triangleq \sqrt{ \sum_{ i = 1 }^k ( x_i^* - x_i )^2 }$ is standard Euclidean distance); and

\item

\textit{Rectangles} in $\mathbb{ R }^k$ of the form $( x_1 - \epsilon_{ x_1 }, x_1 + \epsilon_{ x_1 } ) \times \dots \times ( x_k - \epsilon_{ x_k }, x_k + \epsilon_{ x_k } )$, for $\bm{ x } = ( x_1, \, \dots, \, x_k )$ and $\epsilon_i > 0$ for $( i = 1, \, \dots, \, k )$.

\end{itemize}
Let $\lambda_i$ denote Lebesgue measure on $\mathbb{ R }^i$ for $( i = 1, \, \dots, \, k )$ in what follows. Throughout the paper, we regard $\mathbb{ R }^k$ as (a) a metric space with the distance function $\mathcal{ D }$ mentioned above and (b) a topological space in which the basic open sets are open balls defined by the metric in (a). 

\subsection{Independence in $\mathbb{ R }^k$} \label{s:independence-1}

The general definition of independence of random variables defined on $\mathcal{ S }$  (see, e.g., \cite{shiryaev-1996}), when specialized to our notation with $k = 2$, is as follows: \vspace*{-0.35in}
\begin{quote}

\begin{definition} \label{d:independence-1} 

Random variables $( X, Y )$ are \textit{independent} iff for all Borel sets $\{ B_1, B_2 \}$ with $B_i \in \mathbb{ B }^1$ (for $i = 1, 2$)
\begin{equation} \label{e:independence-1}
P \big[ ( X \in B_1 ) \ \mathrm{and} \ ( Y \in B_2 ) \big] = P ( X \in B_1 ) \cdot P ( Y \in B_2 ) \, .
\end{equation}

\end{definition}

\textbf{Remark \remark.} As usual with definitions, this makes equation (\ref{e:independence-1}) a necessary and sufficient condition for independence; later, as noted in Sections \ref{s:introduction-1}--\ref{s:intuition-1}, as our main result we'll identify a condition that's necessary but not sufficent.

\end{quote}

An alternative approach to defining independence is as follows (proof omitted). \vspace*{-0.35in}
\begin{quote}

\begin{lemma} \label{l:alternative-independence-definition-1}

\textbf{(e.g., \cite{breiman-1992})} For all $( x, y ) \in \mathbb{ R }^2$, let $F_{ XY } ( x, y ) \triangleq P ( X \le x \ \mathrm{and} \ Y \le y )$ be the bivariate \textbf{cumulative distribution function} (CDF) of the random vector $\bm{ W } = ( X, Y )$, and denote by $F_X ( x ) \triangleq P ( X \le x )$ and $F_Y ( y ) \triangleq P ( Y \le y )$ the marginal CDFs for $X$ and $Y$, respectively, based on $F_{ XY } ( x, y )$. Then a necessary and sufficient condition for $X$ and $Y$ to be independent is that
\begin{equation} \label{e:independence-2}
F_{ XY } ( x, y ) = F_X ( x ) \cdot F_Y ( y ) \ \mathrm{for} \ \mathrm{all} \ ( x, y ) \in \mathbb{ R }^2 \, .
\end{equation}

\end{lemma}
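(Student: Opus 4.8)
The plan is to prove the two implications separately, with the reverse direction carrying essentially all of the content. The forward direction is immediate: assuming Definition \ref{d:independence-1}, I would simply specialize the Borel sets to the half-infinite intervals $B_1 = ( -\infty, x ]$ and $B_2 = ( -\infty, y ]$, which are certainly elements of $\mathbb{ B }^1$. Equation (\ref{e:independence-1}) then reads $P ( X \le x \text{ and } Y \le y ) = P ( X \le x ) \cdot P ( Y \le y )$, which is exactly (\ref{e:independence-2}) by the definitions of $F_{ XY }$, $F_X$, and $F_Y$.

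For the reverse direction I would pass from CDFs to the laws induced on $( \mathbb{ R }^2, \mathbb{ B }^2 )$. Let $\mu$ denote the joint distribution of $\bm{ W } = ( X, Y )$, i.e. $\mu ( B ) \triangleq P [ ( X, Y ) \in B ]$ for $B \in \mathbb{ B }^2$, and let $\nu$ denote the product of the two marginal laws, $\nu \triangleq P_X \times P_Y$, where $P_X ( A ) \triangleq P ( X \in A )$ and $P_Y ( A ) \triangleq P ( Y \in A )$ for $A \in \mathbb{ B }^1$. Both $\mu$ and $\nu$ are probability measures on $\mathbb{ B }^2$. The hypothesis (\ref{e:independence-2}) says precisely that $\mu$ and $\nu$ assign the same mass to every \emph{southwest quadrant} $Q_{ x, y } \triangleq ( -\infty, x ] \times ( -\infty, y ]$, since $\mu ( Q_{ x, y } ) = F_{ XY } ( x, y )$ while $\nu ( Q_{ x, y } ) = F_X ( x ) \, F_Y ( y )$.

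The key step is then to upgrade agreement on the quadrants to agreement on all of $\mathbb{ B }^2$. I would observe that the collection $\mathcal{ Q } \triangleq \{ Q_{ x, y } : ( x, y ) \in \mathbb{ R }^2 \}$ is a $\pi$-system --- it is closed under finite intersection, because $Q_{ x, y } \cap Q_{ x', y' } = Q_{ \min ( x, x' ), \, \min ( y, y' ) }$ --- and that $\mathcal{ Q }$ generates the Borel $\sigma$-field $\mathbb{ B }^2$. Since $\mu$ and $\nu$ are finite measures (each of total mass $1$) that coincide on the generating $\pi$-system $\mathcal{ Q }$, Dynkin's $\pi$--$\lambda$ theorem (the uniqueness-of-measures lemma; see, e.g., \cite{breiman-1992}) forces $\mu = \nu$ on all of $\mathbb{ B }^2$. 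This is the part I expect to be the main obstacle, in the sense that it is the only nontrivial machinery in the argument: one must verify both the $\pi$-system and the generation claims, and explicitly invoke finiteness of the two measures (total mass one) to license the uniqueness conclusion.

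Finally I would evaluate the established identity $\mu = \nu$ on product sets. For arbitrary $B_1, B_2 \in \mathbb{ B }^1$ the rectangle $B_1 \times B_2$ lies in $\mathbb{ B }^2$, so $\mu = \nu$ yields $P [ ( X \in B_1 ) \text{ and } ( Y \in B_2 ) ] = \mu ( B_1 \times B_2 ) = \nu ( B_1 \times B_2 ) = P ( X \in B_1 ) \cdot P ( Y \in B_2 )$, which is exactly (\ref{e:independence-1}). Hence (\ref{e:independence-2}) implies Definition \ref{d:independence-1}, and combined with the forward direction this establishes the stated equivalence.
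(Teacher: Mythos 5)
Your proposal is correct. The paper itself states this lemma with the proof omitted (deferring to Breiman, 1992), and your argument --- the immediate forward specialization of Definition \ref{d:independence-1} to $B_1 = ( -\infty, x ]$, $B_2 = ( -\infty, y ]$, plus the $\pi$--$\lambda$ uniqueness-of-measures step upgrading agreement of the joint law and the product law on southwest quadrants to agreement on all of $\mathbb{ B }^2$ --- is precisely the standard proof found in such references, so there is nothing in the paper to compare it against beyond noting that it is the intended argument, carried out correctly (including the point that equal total mass, i.e.\ both being probability measures, is what licenses the uniqueness conclusion).
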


\end{quote}

\subsection{Support sets in $\mathbb{ R }^k$} \label{s:support-sets-1}

Further simplification of \textbf{Definition \ref{d:independence-1}} is possible as a function of the \textit{univariate (marginal) support} of each of $X$ and $Y$ and the \textit{bivariate support} of $( X, Y )$. \vspace*{-0.1in}

\begin{quote}

\textbf{Remark \remark.} The basic idea of the support (set) of a random variable $X$ --- collecting together all values $x$ of $X$ that are nontrivial (i.e., that have positive probability, in the discrete case, or positive density, in the continuous case) --- is both natural and intuitive, but precisely defining this concept for general $X$ is a bit slippery. We follow \cite{billingsley-1995} in laying out the following definitions and lemma (proof omitted). The idea is (a) to define \textit{\textbf{a} (not the)} support (set), then (b) to define the \textit{minimal closed support set}, and then finally (c) to characterize the set in (b) in user-friendly ways.

\begin{definition} \label{d:support-1}

Given a general probability space $\mathcal{ S }^* = ( \Omega^*, \mathcal{ F }^*, P^* )$, \textit{\textbf{a} (not the) support (set)} of $P^*$ is any set $A \in \mathcal{ F }^*$ for which $P^* ( A ) = 1$. \vspace*{-0.1in}

\end{definition}

\textbf{Remark \remark.} By itself this definition is almost completely unhelpful; for example, the entire sample space $\Omega^*$ is \textit{\textbf{a}} support set of $P^*$. The next definition is where the key concept comes to life.

\begin{definition} \label{d:support-2}

The \textit{minimal closed support} of a probability measure $P$ on $\mathbb{ R }^k$ is a closed set $S_P$ such that 
\begin{equation} \label{d:minimal-support-1}
( S_P \subset C ) \ \textrm{for closed} \ C \ \ \ \textrm{iff} \ \ \ C \ \textrm{is \textbf{a} support set of} \  P \, ,
\end{equation}
in which the idea of a \textit{closed} set arises from the topology of $\mathbb{ R }^k$ when considered (as noted above) as a metric space with the Euclidean distance function $\mathcal{ D }$. When $P$ is a probability measure induced by a random variable $X$, we refer to $S_P$ as $S_X$, with similar notation and meaning for $S_Y$ and $S_{ XY }$.

\end{definition}

\begin{definition} \label{d:support-3}

Given a random variable $X$ with CDF $F_X ( x ) \triangleq P ( X \le x )$ (for all real $x$), a possible value $x^*$ of $X$ is called a \textit{point of increase} of $F_X$ iff for all $\epsilon > 0$ 
\begin{equation} \label{e:point-of-increase-1}
F_X ( x^* + \epsilon ) > F_X ( x^* - \epsilon ) \, .
\end{equation}

\end{definition}

\begin{lemma} \label{l:support-1}

\textbf{(\cite{billingsley-1995})} The set $S_P$ in \textbf{Definition \ref{d:support-2}} exists and is unique. Moreover, the minimal closed support set $S_P$ can be characterized in two equivalent ways in the setting of this paper:

\begin{itemize}

\item

for general $k$, $S_{ \bm{ X } } = \{ \bm{ x } \in \mathbb{ R }^k \! : P ( A ) > 0 \ \mathrm{for} \ \mathrm{all} \ \mathrm{open} \ \mathrm{neighborhoods} \ A \ \mathrm{of} \ \bm{ x } \}$; and

\item

for $k = 1$, and for a specific random variable $X$ with CDF $F_X ( x )$, $S_X$ is the set of all points of increase of $F_X$.

\end{itemize}

\end{lemma}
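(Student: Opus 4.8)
The plan is to exhibit an explicit candidate for $S_P$, verify it satisfies every clause of \textbf{Definition \ref{d:support-2}}, and then extract uniqueness and both characterizations almost for free. Define the \emph{topological support}
$$
T \triangleq \{ \bm{x} \in \mathbb{R}^k : P( A ) > 0 \text{ for every open neighborhood } A \text{ of } \bm{x} \},
$$
which is precisely the set named in the first bullet of the lemma; the whole argument reduces to showing that $T$ \emph{is} the minimal closed support. First I would check that $T$ is closed by proving its complement is open: if $\bm{x} \notin T$ then some open $A \ni \bm{x}$ has $P(A) = 0$, and since $A$ is open it is a neighborhood of each of its own points, so $A \subset T^c$; thus $T^c$ is a union of open sets, hence open.

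The crux is to show that $T$ is a support set, i.e. that $P(T^c) = 0$. Cover $T^c$ by the null open balls $\mathcal{B}(\bm{x}, \epsilon_{\bm{x}})$ supplied by the definition of $T^c$. Because $\mathbb{R}^k$ is second countable, hence (hereditarily) Lindelöf, this cover admits a countable subcover $\{ A_n \}$ with $P(A_n) = 0$ for every $n$ (equivalently, one may cover $T^c$ by the countably many rational-center, rational-radius balls of probability zero). Countable subadditivity then yields $P(T^c) \le \sum_n P(A_n) = 0$, so $P(T) = 1$. This reduction — trading an uncountable union of null balls for a countable one — is the single genuinely non-elementary step, and it is exactly where the topological hypotheses on $\mathbb{R}^k$ earn their keep.

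Next I would establish minimality together with the biconditional of \textbf{Definition \ref{d:support-2}}. If $C$ is any closed support set, then $C^c$ is open with $P(C^c) = 0$, so $C^c$ is a null neighborhood of each of its points, forcing $C^c \subset T^c$, i.e. $T \subset C$; this is the $(\Leftarrow)$ direction. Conversely, if $T \subset C$ for closed $C$, then $P(C) \ge P(T) = 1$, so $C$ is a support set, giving $(\Rightarrow)$. Hence $T$ meets every requirement of \textbf{Definition \ref{d:support-2}}, which at one stroke proves \emph{existence} of $S_P$ and the general-$k$ characterization $S_{\bm{X}} = T$. Uniqueness is then immediate from the defining property: if $S_P$ and $S_P'$ both qualify, each is closed and a support set, so applying the biconditional of one to the other yields $S_P \subset S_P'$ and $S_P' \subset S_P$, whence equality.

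Finally, for $k = 1$ I would verify that $T$ coincides with the set of points of increase of $F_X$. Since open balls form a neighborhood basis, $x^* \in T$ iff $P\big( (x^* - \epsilon, x^* + \epsilon) \big) > 0$ for every $\epsilon > 0$. Sandwiching the open interval between half-open ones and using $F_X(b) - F_X(a) = P\big( (a, b] \big)$ converts this into the point-of-increase inequality $F_X(x^* + \epsilon) > F_X(x^* - \epsilon)$: the inequality at radius $\epsilon$ follows at once from positivity of the open ball of radius $\epsilon$ (by inclusion), while positivity of the open ball of radius $\epsilon$ follows from the inequality at radius $\epsilon / 2$. The only care needed here is the open-versus-half-open bookkeeping, which is routine.
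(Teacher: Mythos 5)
Your proposal is correct, but there is no proof in the paper to measure it against: the lemma is attributed to \cite{billingsley-1995} and the paper explicitly says the proof is omitted, so what you have done is fill a gap rather than parallel an existing argument. Your route is the standard one, and every step checks out: taking $T$ to be the set in the first bullet, you get closedness of $T$ because its complement is a union of open null sets; you get $P ( T^c ) = 0$ by extracting a countable subcover of null balls; and the biconditional of \textbf{Definition \ref{d:support-2}} then delivers existence, minimality, and the general-$k$ characterization simultaneously, with uniqueness following abstractly. Your identification of the Lindel\"of/second-countability reduction as the one genuinely non-elementary step is exactly right --- that is where the restriction to $\mathbb{ R }^k$ earns its keep, since in a non-separable topological measure space the set $T$ so defined can fail to carry full mass, and the lemma as stated would be false. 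The $k = 1$ equivalence is also handled correctly: the half-open bookkeeping $F_X ( b ) - F_X ( a ) = P \big[ ( a, b ] \big]$ together with the radius-halving trick in the converse direction is precisely what reconciles \textbf{Definition \ref{d:support-3}} (which compares $F_X$ at $x^* \pm \epsilon$) with the open-interval neighborhood criterion. Two small points you should make explicit rather than implicit: first, your uniqueness argument needs the fact that a set satisfying the definition is itself a support set, which follows from applying the defining biconditional with $C = S_P$ (legitimate because the paper's $\subset$ is non-strict inclusion); second, in the covering step one should note that $T^c$ is open, hence Borel, so that $P ( T^c )$ is even defined before subadditivity is invoked.
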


\begin{figure}[t!]

\centering

\caption{\textit{An approximation to the interesting portion of the Cantor CDF, for $x \in [ 0, 1 ]$, based on 1,023 partition sets along the $x$ axis}.}

\label{f:cantor-pdf}

\includegraphics[ scale = 0.6 ]{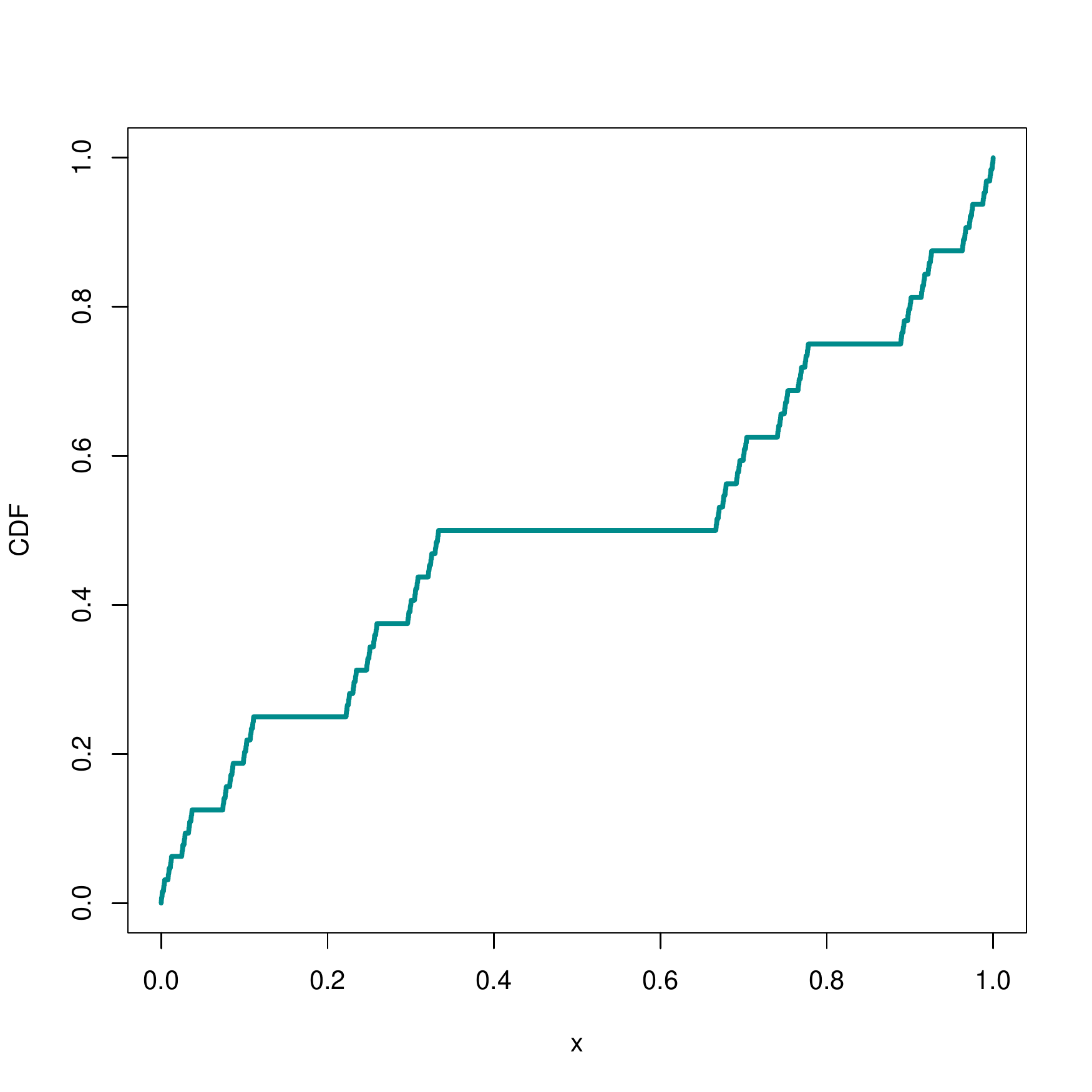}

\end{figure}

\textbf{Remark \remark.} This support machinery is general enough that it can even handle truly weird probability distributions on $\mathbb{ R }$. To set up a nasty example, first consider \textit{Lebesgue's Decomposition Theorem} (e.g., \cite{halmos-1974}) when applied to probability measures on the real line, which states that any CDF $F_X ( x )$ of a random variable $X$ can be expressed as the mixture
\begin{equation} \label{e:lebesgue-decomposition-1}
F_X ( x ) = \pi_1 \, F_X^D ( x ) \, + \, \pi_2 \, F_X^C ( x ) \, + \, \pi_3 \, F_X^S ( x ) \, ,
\end{equation} 
in which $( F_X^D, \, F_X^C, \, F_X^S )$ are CDFs of (discrete, (absolutely) continuous, singular) random variables (respectively) and where $0 \le \pi_i \le 1$ (for $i = 1, 2, 3$) with $\sum_{ i = 1 }^3 \pi_i = 1$. Weirdness can ensue whenever $\pi_3 > 0$ (note that a probability distribution $P$ on $\mathbb{ R }^1$ is \textit{singular} with respect to $\lambda_1$ iff $P$ concentrates all of its probability on a set of $\lambda_1$-measure 0).

\textbf{Example \example.} One of the most notorious probability distributions on $\mathbb{ R }$ is the CDF defined by the \textit{Cantor function} $\mathbb{ C } ( x )$ (e.g., \cite{dovgoshey-et-al-2006}), which has domain and range $[ 0, 1 ]$; this can be made into a CDF $F_{ \mathbb{ C } } ( x )$ on the entire real line by extending $\mathbb{ C } ( x )$ to include the values 0 and 1 on $( - \infty, 0 )$ and $( 1, \infty )$, respectively. The resulting nightmarish CDF (Figure \ref{f:cantor-pdf} illustrates an approximation for $x \in [ 0, 1 ]$) is everywhere continuous but has 0 derivative almost everywhere ($\lambda_1$); moreover, as noted, e.g., by \cite{shiryaev-1996}, letting $\mathcal{ N }$ be the set of points of increase of $F_{ \mathbb{ C } } ( x )$, one can show that $\lambda_1 ( \mathcal{ N } ) = 0$ but $P ( \mathcal{ N } ) = 1$ (in which $P$ is the probability measure on $\mathbb{ R }$ induced by $F_{ \mathbb{ C } }$). This demonstrates both (a) that $F_{ \mathbb{ C } }$ has $\pi_3 = 1$ in equation (\ref{e:lebesgue-decomposition-1}), i.e., that $F_{ \mathbb{ C } }$ defines an entirely singular distribution, and (b) that, even considering this weirdness, by \textbf{Definition \ref{d:support-2}} and \textbf{Lemma \ref{l:support-1}} the (minimal closed) support set for an $X$ with the Cantor CDF is perfectly well defined and equals the entire interval $[ 0, 1 ]$.

\textbf{Remark \remark.} Since the minimal closed support set always exists and is unique in the context of this paper, we simply call it \textit{the support set} or \textit{the support} in what follows.

\textbf{Remark \remark.} We conjecture that it's possible to prove the results examined here using the points-of-increase characterization of the support set in \textbf{Lemma \ref{l:support-1}}, by extending the idea of points-of-increase to $\mathbb{ R }^k$ for $k > 1$, for example using the following definition (which appears to be new to the literature): \vspace*{-0.25in}

\begin{quote}

\begin{definition} \label{d:new-point-of-increase-1}

\textbf{\textit{(new)}} Consider a random vector $\bm{ X } = ( X_1, \, \dots, \, X_k )$ with CDF $F_{ \bm{ X } }( \bm{ x } ) \triangleq P ( \bm{ X } \le \bm{ x } )$ (for all $\bm{ x } = ( x_1, \, \dots, \, x_k ) \in \mathbb{ R }^k$) and let $\epsilon$ be an arbitrary positive number. A possible value $\bm{ x^* } = ( x_1^*, \, \dots, \, x_k^* )$ of $\bm{ X }$ is called a \textit{point of increase} of $F_{ \bm{ X } }$ iff for all $( i = 1, \, \dots, \,  k )$
\begin{equation} \label{e:point-of-increase-2}
F_{ \bm{ X } } ( \, \dots, \, x_i^* + \epsilon, \, \dots \, ) > F_{ \bm{ X } } ( \, \dots, \, x_i^* - \epsilon, \, \dots \, ) \, ,
\end{equation}
in which the notation in equation (\ref{e:point-of-increase-2}) means \textit{hold all coordinates in $\bm{ x^* }$ constant except coordinate $i$ and compare the multivariate CDF at the two values $( x_i^* \pm \epsilon )$}.

\end{definition}

\end{quote}

\end{quote}

Instead of using \textbf{Definition \ref{d:new-point-of-increase-1}}, in the rest of the paper we use the neighborhood characterization in the \textbf{Lemma}, as follows. \vspace*{-0.35in}

\begin{quote}

\begin{definition} \label{d:support-4}

Consider a random vector $\bm{ W } = ( X, Y )$ with values $\bm{ w } = ( x, y ) \in \mathbb{ R }^2$. For any $\epsilon > 0$ let $\mathcal{ B } [ ( x, y ), \epsilon ]$ denote the open ball (circle) in $\mathbb{ R }^2$ of radius $\epsilon$ centered at $( x, y )$. Then the \textit{bivariate support} of $( X, Y )$ is the set
\begin{equation} \label{e:support-1}
S_{ XY } \triangleq \{ ( x, y ) \in \mathbb{ R }^2 \! : P \left\{ \mathcal{ B } [ ( x, y ), \epsilon ) ] \right\} > 0  \textrm{ for all } \epsilon > 0 \} \, ,
\end{equation}
and the \textit{marginal support} of $X$ is the set
\begin{equation} \label{e:support-2}
S_X \triangleq \{ x \in \mathbb{ R } \! : P \left[ ( x - \epsilon_x, x + \epsilon_x ) \right] > 0 \textrm{ for all } \epsilon_x > 0 \} \, ,
\end{equation}

with an analogous definition for $S_Y$, the marginal support of $Y$.

\end{definition}

\end{quote}

\section{The discrete case} \label{s:discrete-case-1}

\subsection{The support sets: the need for \textit{amiable} PMFs} \label{s:discrete-support-1}

When the set $S_{ XY }$ in equation (\ref{e:support-1}) is finite or at most countably infinite, it becomes meaningful to define the (joint) probability mass function (PMF) $p_{ XY } ( x, y )$ of $( X, Y )$ and the marginal PMFs $p_X ( x )$ and $p_Y ( y )$, respectively, in the usual way, as follows:
\vspace*{-0.35in}
\begin{quote}

\begin{definition} \label{d:discrete-case-1}

\textbf{\textit{(e.g., \cite{ash-2000})}} If the cardinality of $S_{ XY }$ is finite or countably infinite, $\bm{ W } = ( X, Y )$ is called a \textit{discrete random vector} with \textit{(joint) probability mass function} (PMF)
$p_{ XY } ( x, y ) \triangleq P ( X = x \textrm{ and } Y = y )$ (for all $( x, y ) \in \mathbb{ R }^2$) and with \textit{marginal} PMFs $p_X ( x ) \triangleq P ( X = x )$ (for all real $x$) and $p_Y ( y ) \triangleq P ( Y = y )$ (for all real $y$), respectively.

\end{definition}

\textbf{Remark \remark.} With reference to the \textit{Lebesgue Decomposition Theorem} on the CDF scale in equation (\ref{e:lebesgue-decomposition-1}), the discrete setting in this Section of the paper corresponds to $\bm{ \pi } \triangleq ( \pi_1, \pi_2, \pi_3 ) = ( 1, 0, 0 )$.

\end{quote}

Considering the marginal PMF of $X$ in this discrete case, it might be hoped that the marginal support set $S_X$ in equation (\ref{e:support-2}) would simply be 
\begin{equation} \label{little-s-not-big-S-1}
s_X \triangleq \{ x \in \mathbb{ R } \! : p_X ( x ) > 0 \}
\end{equation}
(note that $S_X$ and $s_X$ are not necessarily the same), but this is not correct in full generality, as the following unpleasant example (e.g., \cite{billingsley-1995}) shows.

\begin{quote}

\textbf{Example \example.} The rational numbers in $[ 0, 1 ]$ are countable, and may be enumerated using the diagonalization argument given by \cite{cantor-1891} specialized to the unit interval (there are many such enumerations, but they all lead to the same result in this context); call the resulting enumeration set $\mathcal{ E } \triangleq \{ x_1, x_2, \dots \}$, and construct the discrete random variable $X$ with PMF
\begin{equation} \label{e:unpleasant-example-1}
p_X ( x_n ) = \left\{ \begin{array}{ccc} 2^{ - n } & \textrm{if} & x_n \in ( s_X = \mathcal{ E } ) \\ 0 & & \textrm{otherwise} \end{array} \right\} \, .
\end{equation}
Now it turns out that the set of all rational numbers on $[ 0, 1 ]$ is not closed (in the metric/topological space of the real numbers, as specified in Section \ref{s:probability-context-1}), so the support of $X$ is \textit{not} the set $\mathcal{ E }$ of rationals on the unit interval (because all support sets are closed). It's natural to wonder if this can be remedied by working not with $s_X = \{ x \in \mathbb{ R } \! : p_X ( x ) > 0 \}$ but with its \textit{closure}. \vspace*{-0.25in}

\begin{quote}

\begin{definition} \label{d:closure-limit-points-1}

The \textit{closure} $\textit{cl} \, ( A )$ of a set $A \subseteq \mathbb{ R }$ is the union of $A$ with the set $L_A = \{ { x_L^A }_1, { x_L^A }_2 \, , \dots \}$ of all of its \textit{limit points}:
\begin{equation} \label{e:closure-limit-points-1}
\textit{cl} \, ( A ) = A \cup L_A \, .
\end{equation}Here ${ x_L^A }_i$ is a limit point of $A$ iff every neighborhood of ${ x_L^A }_i$ contains at least one point of $A$ different from ${ x_L^A }_i$.

\end{definition}

\end{quote}
Now, finally, since every real number is a limit of rational numbers, the support $S_X$ of the discrete random variable $X$ specified by the PMF in equation (\ref{e:unpleasant-example-1}) is the entire (closed) interval $[ 0, 1 ] = \textit{cl} \, ( s_X )$.

\textbf{Example \example}. A difficulty similar to that in \textbf{Example 3} arises with the slightly less unpleasant but still problematic PMF
\begin{equation} \label{e:unpleasant-example-2}
p_X ( x ) = \left\{ \begin{array}{ccc} x & \textrm{if} & x \in \left( s_X = \{ \frac{ 1 }{ 2 }, \frac{ 1 }{ 4 }, \frac{ 1 }{ 8 }, \, \dots \} \right) \\ 0 & & \textrm{otherwise} \end{array} \right\} \, .
\end{equation}
As in \textbf{Example 3}, $s_X$ is not closed and therefore cannot be the support of this random variable; note that here $s_X$ has the single limit point $\{ 0 \}$. From equation (\ref{e:support-2}) in \textbf{Definition \ref{d:support-4}} it's again evident that $S_X = \textit{cl} \, ( s_X )$.

\end{quote}
It's now reasonable to conjecture that working with $\textit{cl} \, ( s_X )$ instead of $s_X$ solves the problem identified by \textbf{Examples 3--4} in general, not just in those examples. The following result demonstrates that this is indeed true, in the case of a single discrete random variable $X$ (the proof is similar in $\mathbb{ R }^2$ for $( X, Y )$). \vspace*{-0.35in}

\begin{quote} 

\begin{lemma} \label{l:S-X-works-not-s-X-1}

\textbf{\textit{(new)}} Let $X$ be a discrete random variable with PMF $p_X ( x )$, and define $s_X \triangleq \{ x \in \mathbb{ R } \! : p_X ( x ) > 0 \}$. In this setting the general definition of support in equation (\ref{e:support-2}) becomes 
\begin{equation} \label{e:pmf-1}
S_X = \textit{cl} \, ( s_X ) \, ,
\end{equation}
with an analogous expression for $S_{ XY }$ when $X$ and $Y$ are both discrete.

\end{lemma}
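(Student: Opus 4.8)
The plan is to prove the set equality $S_X = \textit{cl}\,(s_X)$ by double inclusion, exploiting the neighborhood characterization of support in equation (\ref{e:support-2}) together with the discreteness of $X$. The key structural fact I would establish first is that, because $X$ is discrete, the probability of any interval $(x-\epsilon_x, x+\epsilon_x)$ is precisely the sum of the point masses it contains: $P\big[(x-\epsilon_x, x+\epsilon_x)\big] = \sum_{x_i \in s_X \,\cap\, (x-\epsilon_x,\, x+\epsilon_x)} p_X(x_i)$. This identity converts the defining condition of $S_X$ (positive probability of every neighborhood) into the purely topological statement that every neighborhood of $x$ meets $s_X$, which is exactly the hallmark of a point of $\textit{cl}\,(s_X)$.

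For the inclusion $\textit{cl}\,(s_X) \subseteq S_X$, I would take any $x \in \textit{cl}\,(s_X)$ and fix an arbitrary $\epsilon_x > 0$. By \textbf{Definition \ref{d:closure-limit-points-1}}, $x$ is either a point of $s_X$ itself or a limit point of $s_X$; in either case the neighborhood $(x-\epsilon_x, x+\epsilon_x)$ contains at least one point $x_i \in s_X$, so the sum above includes the strictly positive term $p_X(x_i) > 0$, giving $P\big[(x-\epsilon_x, x+\epsilon_x)\big] > 0$. Since $\epsilon_x$ was arbitrary, $x$ satisfies the defining condition in equation (\ref{e:support-2}) and hence $x \in S_X$.

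For the reverse inclusion $S_X \subseteq \textit{cl}\,(s_X)$, I would argue by contraposition: suppose $x \notin \textit{cl}\,(s_X)$. Because the closure is a closed set, its complement is open, so there exists some $\epsilon_x > 0$ with $(x-\epsilon_x, x+\epsilon_x) \cap \textit{cl}\,(s_X) = \varnothing$, and in particular $(x-\epsilon_x, x+\epsilon_x) \cap s_X = \varnothing$. Then the defining sum is empty, so $P\big[(x-\epsilon_x, x+\epsilon_x)\big] = 0$, which violates the condition in equation (\ref{e:support-2}) for that $\epsilon_x$; hence $x \notin S_X$. Combining the two inclusions yields equation (\ref{e:pmf-1}).

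The step I expect to be the main obstacle is the countable-additivity identity $P\big[(x-\epsilon_x, x+\epsilon_x)\big] = \sum p_X(x_i)$ when $s_X$ is countably infinite, since one must ensure the mass of the open interval really is exhausted by the atoms it contains with no residual continuous or singular part — this is precisely the content of the discreteness assumption ($\bm{\pi} = (1,0,0)$ in the Lebesgue decomposition of equation (\ref{e:lebesgue-decomposition-1})), so I would invoke that decomposition to justify it cleanly. The $\mathbb{R}^2$ analogue for $S_{XY}$ proceeds identically, replacing intervals by the open balls $\mathcal{B}[(x,y),\epsilon]$ of equation (\ref{e:support-1}) and $s_X$ by the bivariate atom set $s_{XY} \triangleq \{(x,y) : p_{XY}(x,y) > 0\}$, with the open-complement argument carrying over verbatim since $\mathbb{R}^2$ is likewise a metric space under $\mathcal{D}$.
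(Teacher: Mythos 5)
Your proposal is correct, and it proves carefully what the paper argues somewhat informally; the difference is one of organization rather than of underlying facts, but the comparison is instructive. The paper's proof splits into two cases according to whether $L_{s_X}$, the set of limit points of $s_X$, is empty: in case (i) (e.g., a Poisson PMF) it concludes $S_X = s_X = \textit{cl}\,(s_X)$, and in case (ii) (the paper's unpleasant Examples 3--4) it observes that every neighborhood of every point of $s_X \cup L_{s_X}$ has positive probability. As written, that second case only establishes the inclusion $\textit{cl}\,(s_X) \subseteq S_X$; the reverse containment --- that no point outside $\textit{cl}\,(s_X)$ can lie in $S_X$ --- is left implicit there. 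Your double-inclusion scheme removes the need for any case split, because the dichotomy ``$x \in s_X$ or $x$ is a limit point of $s_X$'' is exactly \textbf{Definition \ref{d:closure-limit-points-1}} of the closure, and both branches feed the same one-line positivity argument; meanwhile your contraposition step, which uses the openness of the complement of the closed set $\textit{cl}\,(s_X)$, supplies precisely the inclusion $S_X \subseteq \textit{cl}\,(s_X)$ that the paper only addresses inside case (i). You are also right that the single genuinely measure-theoretic ingredient is the purely atomic identity $P\big[(x - \epsilon_x, x + \epsilon_x)\big] = \sum_{x_i \in s_X \cap (x - \epsilon_x,\, x + \epsilon_x)} p_X(x_i)$; the paper uses this silently in both of its cases (for instance when it asserts that a small enough neighborhood of a point with $p_X(x^*) = 0$ has probability zero), so isolating it and justifying it via $\bm{\pi} = (1,0,0)$ in the Lebesgue decomposition of equation (\ref{e:lebesgue-decomposition-1}) --- equivalently, via countability of the support together with countable additivity --- is a clean improvement rather than an obstacle. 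Your sketch of the bivariate extension, with open balls replacing intervals, matches the paper's remark that ``the proof is similar in $\mathbb{R}^2$.''
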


\begin{proof}

We give details for $k = 1$. There are two cases to consider: 

\begin{itemize}

\item

In case (i), exemplified (for instance) by a Poisson$( \eta )$ PMF with any $\eta > 0$, $L_{ s_X } = \varnothing$, because if you choose any $x^*$ such that $p_X ( x^* ) = 0$, for small enough $\epsilon$ the neighborhood around $x^*$ with radius $\epsilon$ will have 0 probability under $p_X$. In this case, by equation (\ref{e:support-2}) in \textbf{Definition \ref{d:support-4}}, $S_X = s_X = ( s_X \cup \varnothing ) = ( s_X \cup L_{ s_X } ) = \textit{cl} \, ( s_X )$, yielding equation (\ref{e:pmf-1}); and

\item

In case (ii), illustrated by the unpleasant \textbf{Examples 3--4} above, $S_X \ne s_X$ because $L_{ s_X } \ne \varnothing$; but in this setting if we construct $S_X = \textit{cl} \, ( s_X ) = s_X \cup L_{ s_X }$, every neighborhood of every point in $S_X$ will have positive probability, again yielding (\ref{e:pmf-1}).

\end{itemize}

\end{proof}

\end{quote}

To rule out unpleasant PMFs such as those in \textbf{Examples 3--4}, which have no useful place in practical data science, in what follows we restrict attention solely to the discrete distributions in case (i) in the proof of \textbf{Lemma \ref{l:S-X-works-not-s-X-1}}, by making the following definition. \vspace*{-0.35in}

\begin{quote}

\begin{definition} \label{d:practical-support-1}

\textbf{\textit{(new)}} A discrete random variable $X$ with PMF $p_X ( x )$ is said to be \textbf{\textit{amiable}} iff $S_X = s_X = \{ x \in \mathbb{ R } \! : p_X ( x ) > 0 \}$, with an analogous definition for $S_{ XY }$ when $X$ is part of a bivariate random vector; note that amiability is equivalent to the conditions (i) that $L_{ s_X } = \varnothing$ and (ii) that $s_X$ is closed.

\end{definition}

\textbf{Remark \remark.} All well-regarded introductory probability textbooks (e.g., \cite{degroot-schervish-2012}) are written as if all discrete random variables are amiable in the sense of \textbf{Definition \ref{d:practical-support-1}}, and with good reason: \textit{all} of the standard PMFs in everyday probability and data science (e.g., Bernoulli, Beta-Binomial, Binomial, Hypergeometric, Negative Binomial (including Geometric), Poisson, and (discrete) Uniform) are amiable, because for each of them the corresponding set $s_X$ has no limit points. 

\end{quote}

\subsection{The probabilities under independence in the discrete setting} 
\label{s:discrete-probabilities-independence-1}

Consider what happens to the general definition of independence of two random variables $X$ and $Y$ in the discrete case. \textbf{Definition \ref{d:independence-1}} says that $P \big[ ( X \in B_1 ) \ \mathrm{and} \ ( Y \in B_2 ) \big] = P ( X \in B_1 ) \cdot P ( Y \in B_2 )$ for \textit{all} Borel sets $B_1$ and $B_2$ in $\mathbb{ B }^1$. So consider \textit{singleton} sets of the form $( X = x )$ and $( Y = y )$; all such sets are Borel. Thus \textbf{Definition \ref{d:independence-1}} in the discrete case specializes to the following: if discrete $X$ and $Y$ are independent, then
\begin{equation} \label{e:independence-3}
p_{ XY } ( x, y ) = p_X ( x ) \cdot p_Y ( y ) \textrm{ for all } ( x, y ) \textrm{ in } \mathbb{ R }^2 \, .
\end{equation}
The converse is also true (proof omitted).

\subsection{Our result in the discrete case} \label{s:discrete-result-1}

We can now state and prove our main result in the discrete case, which illustrates the basic ideas of the proof in the general setting of Section \ref{s:general-case-1}. \vspace*{-0.35in}

\begin{quote}

\begin{proposition} \label{p:discrete-proposition-statement-1}

\textbf{\textit{(new)}} Let $X$ and $Y$ be amiable discrete random variables with support sets and PMFs $\{ S_X, p_X ( x ) \}$ and $\{ S_Y, p_Y ( y ) \}$, respectively, and joint support set and PMF $\{ S_{ XY }, p_{ XY } ( x, y ) \}$. If $X$ and $Y$ are independent, then
\begin{equation} \label{e:discrete-support-1}
S_{ XY } = S_X \times S_Y \, .
\end{equation}
\vspace*{-0.45in}

\end{proposition}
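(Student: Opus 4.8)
The plan is to reduce the set identity (\ref{e:discrete-support-1}) to a statement about where the PMFs are positive, exploiting the fact that amiability collapses the (closed) support down to exactly the PMF-positivity set. Writing $s_X \triangleq \{x : p_X(x) > 0\}$, $s_Y \triangleq \{y : p_Y(y) > 0\}$, and $s_{XY} \triangleq \{(x,y) : p_{XY}(x,y) > 0\}$, amiability of $X$ and $Y$ (\textbf{Definition \ref{d:practical-support-1}}) gives $S_X = s_X$ and $S_Y = s_Y$, with both of these sets closed. So it suffices to show that $s_{XY} = s_X \times s_Y$ and that this product is the genuine support $S_{XY}$, not merely its PMF-positivity set.

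First I would establish the PMF-level factorization of the positivity sets. Discrete independence (equation (\ref{e:independence-3})) gives $p_{XY}(x,y) = p_X(x) \, p_Y(y)$ for every $(x,y)$, and since all three quantities are nonnegative, $p_{XY}(x,y) > 0$ holds exactly when both $p_X(x) > 0$ and $p_Y(y) > 0$. Hence $s_{XY} = \{(x,y) : p_X(x) > 0 \text{ and } p_Y(y) > 0\} = s_X \times s_Y$. This is the only place where independence enters, and it is a one-line consequence of nonnegativity.

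Next I would pass from $s_{XY}$ to the actual support $S_{XY}$. By the bivariate analogue of \textbf{Lemma \ref{l:S-X-works-not-s-X-1}} we have $S_{XY} = \textit{cl}\,(s_{XY})$, so it remains to check that $\textit{cl}\,(s_{XY}) = s_{XY}$, i.e., that the joint vector is itself amiable. Combining the previous step with amiability of the marginals, $s_{XY} = s_X \times s_Y = S_X \times S_Y$ is a Cartesian product of two closed subsets of $\mathbb{R}^1$; the product of closed sets is closed in the Euclidean topology on $\mathbb{R}^2$, so $s_{XY}$ has no limit points outside itself and $\textit{cl}\,(s_{XY}) = s_{XY}$. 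Chaining the equalities then yields $S_{XY} = s_{XY} = s_X \times s_Y = S_X \times S_Y$.

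I expect the main obstacle to be exactly this last passage --- verifying that the joint vector inherits amiability, so that $S_{XY}$ really equals $s_{XY}$ rather than a strictly larger closure. Amiability is hypothesized only for the marginals $X$ and $Y$, so it must be earned for $(X,Y)$, and the engine for that is the topological fact that a product of closed sets is closed. An alternative route that sidesteps the closure bookkeeping is to argue directly from the neighborhood characterization in \textbf{Definition \ref{d:support-4}}: the inclusion $S_{XY} \subseteq S_X \times S_Y$ is automatic, since any ball about $(x,y)$ sits inside a coordinate rectangle whose marginal projections then carry positive probability, while the reverse inclusion uses independence to factor the probability of a small rectangle inscribed in each ball as a product of two positive marginal-interval probabilities. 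Either way, independence is used only to factor a rectangle (equivalently, a product PMF), and the remaining work is purely topological.
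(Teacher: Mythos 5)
Your proposal is correct, and its engine is the same as the paper's own proof: independence factors the joint PMF (equation (\ref{e:independence-3})), \textbf{Fact $\bm{(*)}$} (nonnegativity) turns positivity of the product into positivity of both factors, and amiability identifies the marginal positivity sets with the marginal supports, giving $s_{XY} = s_X \times s_Y = S_X \times S_Y$. Where you genuinely part ways is in the passage from the positivity set $s_{XY}$ to the support $S_{XY}$, and there your treatment is more complete than the paper's. The paper's proof opens by asserting, ``under amiability and independence,'' that $S_{XY} = \{ (x,y) \in \mathbb{R}^2 \colon p_{XY}(x,y) > 0 \}$; since amiability is hypothesized only for the marginals $X$ and $Y$, this silently assumes that the joint vector is itself amiable, a step the paper never justifies. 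You earn that step instead of assuming it: the bivariate analogue of \textbf{Lemma \ref{l:S-X-works-not-s-X-1}} gives $S_{XY} = \textit{cl}\,(s_{XY})$, and since $s_{XY} = S_X \times S_Y$ is a Cartesian product of closed subsets of $\mathbb{R}^1$, it is closed in $\mathbb{R}^2$, so the closure is vacuous and $S_{XY} = s_{XY}$. This is exactly the bookkeeping needed to make the paper's one-line identification rigorous. Your closing observation is also correct and slightly sharper than the paper: the inclusion $S_{XY} \subseteq S_X \times S_Y$ requires no independence at all, since the probability of a marginal interval dominates that of any coordinate rectangle and hence of any inscribed ball, whereas the paper's proof of \textbf{Theorem \ref{t:general-theorem-statement-1}} invokes independence and \textbf{Fact $\bm{(*)}$} even for that direction.
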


\begin{proof}

Under amiability and independence, from equation (\ref{e:pmf-1})  the bivariate support set takes the form
\begin{equation} \label{s:bivariate-support-1}
S_{ XY } = \{ ( x, y ) \in \mathbb{ R }^2 \! : p_{ XY } ( x, y ) > 0 \} = \{ ( x, y ) \in \mathbb{ R }^2 \! : p_X ( x ) \cdot p_Y ( y ) > 0 \} \, .
\end{equation}
Now we use a basic fact about real numbers: 

\begin{quote}

\textbf{Fact $\bm{ ( * ) }$} If real $a$ and $b$ are both non-negative and their product is positive, they must both be positive. 

\end{quote}
This implies, using \textbf{Fact $\bm{ ( * ) }$} and the definition of a Cartesian product, that
\begin{eqnarray} \label{s:bivariate-support-2}
S_{ XY } & = & \{ ( x, y ) \in \mathbb{ R }^2 \! : p_X ( x ) > 0 \textrm{ and } p_Y ( y ) > 0 \} \nonumber \\
& = & \{ x \in \mathbb{ R } \! : p_X ( x ) > 0 \} \times \{ y \in \mathbb{ R } \! : p_Y ( y ) > 0 \} = S_X \times S_Y \, ,
\end{eqnarray}
as desired.

\end{proof}

\textbf{Remark \remark.} The contrapositive form of \textbf{Proposition \ref{p:discrete-proposition-statement-1}} establishes a \textit{necessary} condition for independence with amiable discrete random variables: for example, to demonstrate that amiable discrete $X$ and $Y$ are \textit{dependent}, all you have to do is to show that their bivariate support set doesn't factor, which is typically easier than (a) extracting their marginal PMFs and (b) showing that the bivariate PMF is different from the product of the marginals.

\textbf{Remark \remark.} Our necessary condition for independence is of course far from sufficient; it's easy to construct settings in which the support sets factor but the PMFs do not (e.g., see \textbf{Examples 7--8} in Section \ref{s:examples-1} below).

\textbf{Remark \remark. Proposition \ref{p:discrete-proposition-statement-1}} has an equivalent formulation in terms of \textit{conditional} PMFs and support sets, as follows. For a fixed $x \in \mathbb{ R }$ such that $p_X ( x ) > 0$, and continuing the case in which discrete $X$ and $Y$ are each amiable, define the \textit{conditional support of $Y$ given $( X = x )$} to be
\begin{equation} \label{e:conditional-support-1}
S_{ ( Y \given X = x ) } \triangleq \{ y \in \mathbb{ R } \! : p_{ \, Y \given X } ( y \given x ) > 0 \}, 
\end{equation}
in which $p_{ \, Y \given X } ( y \given x ) = P ( Y = y \given X = x )$ is the conditional PMF of $Y$ given $( X = x )$, and let $S_{ ( X \given Y = y ) }$ be defined analogously. Then (the trivial proof is omitted)

\begin{proposition} \label{p:discrete-proposition-2}

\textbf{\textit{(new)}} Let $X$ and $Y$ be amiable discrete random variables with marginal and conditional support sets $\{ S_X , S_{ ( X \given Y = y ) } \}$ and $\{ S_Y, S_{ ( Y \given X = x ) } \}$, respectively. If $X$ and $Y$ are independent, then
\begin{equation} \label{e:discrete-support-2}
\mathrm{for \ all} \ y \in \mathbb{ R }, \ S_{ ( X \given Y = y ) } = S_X \ \ \ \mathrm{and} \ \ \ \mathrm{for \ all} \ x \in \mathbb{ R }, \ S_{ ( Y \given X = x ) } = S_Y  \, .
\end{equation}

\end{proposition}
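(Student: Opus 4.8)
The plan is to reduce the claim to the discrete factorization of PMFs under independence, equation (\ref{e:independence-3}), combined with amiability, by passing through the definition of the conditional PMF. The conditional support $S_{ ( Y \given X = x ) }$ in equation (\ref{e:conditional-support-1}) is only meaningful when the conditioning event has positive probability, i.e., when $p_X ( x ) > 0$; by amiability of $X$ this is exactly the condition $x \in S_X$. So the first step I would take is to read the quantifier ``for all $x \in \mathbb{ R }$'' in equation (\ref{e:discrete-support-2}) as ``for all $x$ with $p_X ( x ) > 0$,'' and symmetrically for the $y$-statement; this is where the sole point of care in the proof lies.

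With that reading in hand, the computation is immediate. For fixed $x$ with $p_X ( x ) > 0$, the conditional PMF is $p_{ \, Y \given X } ( y \given x ) = p_{ XY } ( x, y ) / p_X ( x )$; substituting the independence factorization $p_{ XY } ( x, y ) = p_X ( x ) \, p_Y ( y )$ from equation (\ref{e:independence-3}) and cancelling the common positive factor $p_X ( x )$ gives $p_{ \, Y \given X } ( y \given x ) = p_Y ( y )$ identically in $y$. Hence
\[
S_{ ( Y \given X = x ) } = \{ y \in \mathbb{ R } \! : p_{ \, Y \given X } ( y \given x ) > 0 \} = \{ y \in \mathbb{ R } \! : p_Y ( y ) > 0 \} = s_Y \, ,
\]
and amiability of $Y$ (\textbf{Definition \ref{d:practical-support-1}}) upgrades $s_Y$ to the support set $S_Y$ with no closure operation required. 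The companion assertion $S_{ ( X \given Y = y ) } = S_X$ for $y$ with $p_Y ( y ) > 0$ follows by the identical argument with the roles of $X$ and $Y$ exchanged.

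I expect no genuine obstacle here — this is the ``trivial proof'' the authors set aside. The only thing worth flagging is the dual role played by amiability: it guarantees $S_X = s_X$, so that ``positive marginal probability'' and ``membership in the support'' coincide (tidying up the quantifier), and it lets us identify $\{ y \! : p_Y ( y ) > 0 \}$ directly with the closed set $S_Y$. As a sanity check, the result can also be seen as a corollary of \textbf{Proposition \ref{p:discrete-proposition-statement-1}}: since independence gives $S_{ XY } = S_X \times S_Y$, the slice of $S_{ XY }$ at any fixed $y \in S_Y$ is exactly $S_X$, which (under amiability) is precisely the statement $S_{ ( X \given Y = y ) } = S_X$.
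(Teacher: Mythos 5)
Your proposal is correct. The paper gives no argument for this proposition at all --- it is set aside with ``the trivial proof is omitted'' --- and your direct computation (cancel the positive factor $p_X ( x )$ in $p_{ \, Y \given X } ( y \given x ) = p_{ XY } ( x, y ) / p_X ( x )$ using the factorization (\ref{e:independence-3}), then invoke amiability to identify $\{ y \in \mathbb{ R } \! : p_Y ( y ) > 0 \}$ with the closed set $S_Y$) is exactly the routine argument the authors evidently had in mind; your restriction of the quantifier to those $x$ with $p_X ( x ) > 0$ (equivalently $x \in S_X$, by amiability) is the right reading, since the conditional support in equation (\ref{e:conditional-support-1}) is defined only for such $x$, and this is a legitimate repair of a minor imprecision in the statement rather than a gap in your proof.
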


\end{quote}

\section{The continuous case} \label{s:continuous-case-1}

\subsection{The support sets: the need for \textit{canonical} PDFs} \label{s:continuous-support-1}

When the set $S_{ XY }$ in equation (\ref{e:support-1})  is uncountably infinite, it becomes meaningful to define a (joint) probability density function (PDF) for $( X, Y )$ and marginal PDFs $f_X ( x )$ and $f_Y ( y )$, respectively, in the usual way, as follows.
\vspace*{-0.35in}
\begin{quote}

\begin{definition} \label{d:continuous-1}

\textbf{\textit{(e.g., \cite{feller-1970})}} Let $X$ and $Y$ be random variables with joint CDF $F_{ XY } ( x, y )$ and marginal CDFs $F_X ( x )$ and $F_Y ( y )$, and consider the \textit{continuous} case in which (the bivariate probability measure induced by) $F_{ XY } ( x, y )$ is \textit{absolutely continuous} with respect to $\lambda_2$. This means (i) that \textit{\textbf{a} (not the)} non-negative bivariate \textit{probability density function} (PDF) $f_{ XY } ( x, y )$ exists with the property that for almost all (Lebesgue) $( x, y ) \in \mathbb{R }^2$ 
\begin{equation} \label{e:continuous-case-1}
F_{ XY } ( x, y ) = \int_{ - \infty }^x \int_{ - \infty }^y f_{ XY } ( s, t ) \, dt \, ds \ \ \ \textrm{and} \ \ \ \frac{ \partial^2 }{ \partial x \, \partial y } F_{ XY } ( x, y ) = f_{ XY } ( x, y ) \, ,
\end{equation}
and (ii) that \textit{\textbf{a} (not the)} non-negative \textit{marginal PDF} $f_X ( x )$ for $X$ and \textit{\textbf{a} (not the)} non-negative marginal PDF $f_Y ( y )$ for $Y$ exist such that for almost all (Lebesgue) $x$ and $y \in \mathbb{ R }^1$ 
\begin{equation} \label{e:continuous-case-2}
F_X ( x ) = \int_{ - \infty }^x  f_X ( s ) \, ds \, \ \ \ \textrm{and} \ \ \ \frac{ \partial }{ \partial x } F_X ( x ) = f_X ( x ) \, .
\end{equation}
and
\begin{equation} \label{e:continuous-case-3}
F_Y ( y ) = \int_{ - \infty }^y  f_Y ( t ) \, dt \, \ \ \ \textrm{and} \ \ \ \frac{ \partial }{ \partial y } F_Y ( y ) = f_Y ( y ) \, .
\end{equation}

\end{definition}

\textbf{Remark \remark.} Unfortunately, as is well known, this means that, under absolute continuity of $F_{ XY }$, the $f_{ XY } ( s, t )$ in (\ref{e:continuous-case-1}) and the $f_X ( s )$ and $f_Y ( t )$ in (\ref{e:continuous-case-2}--\ref{e:continuous-case-3}) are unique only up to sets of Lebesgue measure 0; thus

\begin{itemize}

\item

there are infinitely many possible densities associated with each of $\{ F_{ XY }, F_X,$ $F_Y \}$, and

\item

all that we can say for sure about the differentiability of each of the CDFs $\{ F_{ XY }, F_X,$ $F_Y \}$ is that (a) $F_{ XY }$ is twice differentiable (once in $x$, once in $y$) almost everywhere ($\lambda_2$) and (b) each of $F_X$ and $F_Y$ are (once) differentiable almost everywhere ($\lambda_1$).

\end{itemize}

This prompts the following \textbf{Definition}, which is new only in its proposed terminology.

\begin{definition} \label{d:version-collection-1}

\textbf{\textit{(new)}} We refer to any $f_{ XY }$ satisfying equation (\ref{e:continuous-case-1}) as a \textit{version} $f_{ XY }^{ \, * }$ of the \textit{collection} $\{ f_{ XY }^{ \, t } ( x, y ), t \in \mathbb{ R } \}$ of PDFs \textit{possessed by} an absolutely continuous CDF $F_{ XY }$, with similar terminology and notation for $f_X^{ \, u }$ and $f_Y^{ \, v }$.

\end{definition}

\textbf{Remark \remark.} With reference to the \textit{Lebesgue Decomposition Theorem} on the CDF scale in equation (\ref{e:lebesgue-decomposition-1}), the continuous setting in this Section of the paper corresponds to $\bm{ \pi } = ( 0, 1, 0 )$.

\textbf{Remark \remark.} Note that, in the absolutely continuous case, all singletons $\{ \hat{ x } \} \in \mathbb{ R }$ and $\{ \bm{ \hat{ x } } \} \in \mathbb{ R }^2$ have probability 0 under \textit{all} versions of the relevant densities $\{ f_{ XY }^{ \, * }, f_X^{ \, * }, f_Y^{ \, * } \}$.

\end{quote}

Consider first the marginal distribution for $X$ defined by $F_{ XY }$. As in the discrete case, for any version $f_X^{ \, * }$, the set $s_X^{ \, * } = \{ x \in \mathbb{ R } \! : f_X^{ \, * } ( x ) > 0 \}$ is of considerable interest, as is its closure $\left( s_X^{ \, * } \right)_{ \textit{cl} \, } \triangleq \textit{cl} \, ( s_X^{ \, * } )$. It might be hoped that all PDF versions would share the same $\textit{cl} \, ( s_X^{ \, * } )$, but this is not true, as the following example shows. \vspace*{-0.15in}

\begin{quote}

\textbf{Example \example.} Consider two versions of the familiar continuous Uniform distribution on the unit interval:
\begin{equation} \label{e:uniform-versions-1}
f_X^{ \, 1 } ( x ) = \left\{ \begin{array}{ccc} 1 & \textrm{if} & x \in ( 0, 1 ) \\ 0 & & \textrm{otherwise} \end{array} \right\} \ \ \ \textrm{and} \ \ \ f_X^{ \, 2 } ( x ) = \left\{ \begin{array}{ccc} 1 & \textrm{if} & x \in \big\{ ( 0, 1 ) \, \cup \, \{ \hat{ x } \} \big\} \\ 0 & & \textrm{otherwise} \end{array} \right\} \, ,
\end{equation}
in which $\hat{ x }$ is any singleton not contained in $( 0, 1 )$. It may appear at first that these two versions share the same set $\textit{cl} \, ( s_X^{ \, i } )$; however, singletons on the real number line are closed sets (in the topology relevant to this paper; see Section \ref{s:probability-context-1}), so $\textit{cl} \, ( s_X^{ \, 1 } ) = [ 0, 1 ] \ne \textit{cl} \, ( s_X^{ \, 2 } ) = [ 0, 1 ] \cup \{ \hat{ x } \}$. This slightly unpleasant example can be extended further to a much nastier version $f_X^{ \, 3 }$ for which $\textit{cl} \, ( s_X^{ \, 3 } )$ is the entire real line (by putting point masses of height 1 at all of the rational numbers in $\mathbb{ R }$ outside $( 0, 1 )$), even though all of the probability associated with all three versions is concentrated solely on $( 0, 1 )$.

\textbf{Remark \remark.} Excellent introductory (non-measure-theoretic) textbooks (e.g., \cite{degroot-schervish-2012}, p.~101) can be found which state (edited to match our notation) that ``If $X$ has a continuous distribution,~...~the closure of the set $\{ x \! : f_X ( x ) > 0 \}$ is called the \textit{support} of (the distribution of) $X$.'' As \textbf{Example 5} shows, however, this cannot be the whole story in full generality, because different versions $f_X^{ \, * }$ in the PDF collection possessed by an absolutely continuous CDF $F_X$ can have different sets of the form $\textit{cl} \, (  \{ x \! : f_X^{ \, * } ( x ) > 0 \} )$.

\end{quote}

In parallel with the discrete case, we wish to rule out unpleasant PDFs such as $f_X^{ \, 2 }$ and $f_X^{ \, 3 }$ in \textbf{Example 5}, which have no useful place in practical data science. Given a particular CDF of interest, the best way to remedy this problem turns out to be to \textit{construct} a special PDF version from the CDF, as in the following definition. \vspace*{-0.35in}

\begin{quote}

\begin{definition} \label{d:canonical-pdf-1}

Let $F_X$ be an absolutely continuous CDF on $\mathbb{ R }$ with collection $\{ f_X^{ \, t } ( x ), t \in \mathbb{ R } \}$ of PDF versions. For any $x^* \in \mathbb{ R }$ there are two possibilities: either $F_X$ is differentiable at $x^*$ or it's not (recall from \textbf{Remark 13} that all we know for sure is that $F_X$ is differentiable almost everywhere ($\lambda_1$)). Define the \textit{canonical} version $f_X^{ \, C }$ as follows:
\begin{equation} \label{e:canonical-pdf-1}
f_X^{ \, C } ( x^* ) = \left\{ \begin{array}{ccc} \left[ \frac{ \partial }{ \partial \, x } \, F_X ( x ) \right]_{ x = x^* } & \textrm{if} & F_X \textrm{ is differentiable at } x^* \\ 0 & & \textrm{otherwise} \end{array} \right\} \, ,
\end{equation}
with an analogous expression for $f_Y^{ \, C } ( y^* )$. The corresponding definition for the joint CDF $F_{ XY }$ is
\begin{equation} \label{e:canonical-pdf-2}
f_{ XY }^{ \, C } ( x^*, y^* ) = \left\{ \begin{array}{ccc} \left[ \frac{ \partial^2 }{ \partial x \, \partial y } \, F_{ XY } ( x, y ) \right]_{ ( x, y ) = ( x^*, y^* ) } & \textrm{if} & \left[ \begin{array}{c} F_{ XY } \textrm{ is twice} \\ \textrm{differentiable (once in } x, \\ \textrm{once in } y ) \textrm{ at } ( x^*, y^* ) \end{array} \right] \\ 0 & & \textrm{otherwise} \end{array} \right\} \, .
\end{equation}

\end{definition}

\textbf{Remark \remark.} Since the PDFs $\{ f_X^{ \, C } ( x^* ), f_Y^{ \, C } ( y^* ), f_{ XY }^{ \, C } ( x^*, y^* ) \}$ have been defined in equations (\ref{e:canonical-pdf-1}-\ref{e:canonical-pdf-2}) in a pointwise manner with no ambiguity at any point, it's clear that the canonical PDFs defined in this way always exist and are unique.

\textbf{Remark \remark.} Note that the unpleasant PDF versions $f_X^{ \, 2 }$ and $f_X^{ \, 3 }$ in \textbf{Example 5} lose their power to confound us if we restrict attention to the canonical PDF version of the Uniform$( 0, 1 )$ distribution: all three of the PDF versions $\{ f_X^{ \, i } \}$ (for $i = 1, 2, 3$) arise from the same CDF, which (as usual with $X \sim$ Uniform$( 0, 1 )$) is 
\begin{equation} \label{e:uniform-cdf-1}
F_X ( x ) = \left\{ \begin{array}{ccc} 0 & \textrm{if} & x \le 0 \\ x & & 0 \le x \le 1 \\ 1 & & x \ge 1 \end{array} \right\} \, ,
\end{equation}
but we're free to choose the canonical PDF version if we wish and ignore all other versions as unhelpful in day-to-day statistical modeling.

\textbf{Example \example.} Consider the following examples of the application of \textbf{Definition 12}.

\begin{itemize}

\item

The standard Normal CDF $\Phi ( x )$ is differentiable for all $x \in \mathbb{ R }$, so the canonical PDF corresponding to the standard Normal CDF is the usual textbook Gaussian PDF $\phi ( x )$. If we try to work instead with a Normal PDF that equals $\phi ( x )$ except at a finite or countably infinite set of singletons, the result cannot be canonical, because $\Phi ( x )$ is everywhere differentiable. The same remarks (of course) apply to the entire $N ( \mu, \sigma^2 )$ family of CDFs and PDFs.

\item

One parameterization of the usual family of Exponential$( \eta )$ distributions (indexed by $\eta > 0$) has a family of CDFs given by the expression
\begin{equation} \label{e:exponential-cdf-1}
F_X ( x ) = \left\{ \begin{array}{ccc} 0 & \textrm{for} & x \le 0 \\ 1 - e^{ - \eta \, x } & \textrm{for} & x \ge 0 \end{array} \right\} \, .
\end{equation}
This CDF is differentiable everywhere except at the point $x = \{ 0 \}$; the resulting canonical PDF family then becomes
\begin{equation} \label{e:exponential-pdf-1}
f_X^{ \, C } ( x ) = \left\{ \begin{array}{ccc} \eta \, e^{ - \eta \, x } & \textrm{for} & x > 0 \\ 0 & & \textrm{else} \end{array} \right\} \, .
\end{equation}
Note for this example that $s_X^{ \, C } = \{ x \in \mathbb{ R } \! : f_X^{ \, C } ( x ) > 0 \} = ( 0, \infty )$, an open set whose closure is $\textit{cl} \, ( s_X^{ \, C } ) = [ 0, \infty )$. Many (but not all) good introductory probability texts define \text{the} Exponential$( \eta )$ PDF as in equation (\ref{e:exponential-pdf-1}); others define the positive-PDF region to be $[ 0, \infty )$, for which the closure is still $[ 0, \infty )$.

\end{itemize}

\textbf{Remark \remark.} In typical statistical data-science modeling with an unknown quantity about which (from problem context) our uncertainty is continuous, we're accustomed to concentrating on the \underline{P}DF except when we need to compute (e.g.) tail areas, when (of course) we need to think about the \underline{C}DF. This can put us into a mindset in which we \textit{start} with the PDF and ask ``What CDF corresponds to this PDF?'' It's crucial to our definition of canonical PDFs that we think about the CDF--PDF relationship \textit{in the other direction}: we start with the \underline{C}DF and ask ``What version of the collection of \underline{P}DFs possessed by this \underline{C}DF is the most useful?'' As an extreme instance of what happens when we try to create a counterexample to our canonical--PDF definition, consider the following PDF version of the Uniform$( 0, 1 )$ CDF, which follows on from \textbf{Example 5}:
\begin{equation} \label{e:uniform-versions-2}
f_X^{ \, 4 } ( x ) = \left\{ \begin{array}{ccc} 1 & \textrm{if} & x \in ( 0, 1 ) \textrm{ and } x \textrm{ is irrational} \\ 0 & & \textrm{otherwise} \end{array} \right\} \, .
\end{equation}
This deeply unpleasant version is obtained by punching a hole in version 1 of the Uniform PDF in equation (\ref{e:uniform-versions-1}) at every rational number in the unit interval (a countable collection of singletons) and replacing the value of the PDF version there with 0. However, \textit{the CDF possessing this PDF version is still the usual Uniform CDF given by equation (\ref{e:uniform-cdf-1})}, so $f_X^{ \, 4 }$ cannot be canonical, and in fact is only useful from a statistically practical point of view as a curiosity that can be safely ignored.

\textbf{Remark \remark.} In the rest of this section \textit{we work only with the canonical versions of all PDF collections}.

\end{quote}

We're now ready to identify the manner in which the general support set definition in equation (\ref{e:support-2}) specializes in the continuous case. \vspace*{-0.35in}

\begin{quote} 

\begin{lemma} \label{l:S-X-works-not-s-X}

\textbf{\textit{(new)}} Let $F_X ( x )$ be an absolutely continuous CDF with canonical PDF $f_X^{ \, C }$ defining the sets $s_X^{ \, C } \triangleq \{ x \in \mathbb{ R } \! : f_X^{ \, C } ( x ) > 0 \}$ and $\left( s_X^{ \, C } \right)_{ \textit{cl} \, }\triangleq \textit{cl} \, ( s_X^{ \, C } )$. In this setting the general definition of support in equation (\ref{e:support-2}) becomes 
\begin{equation} \label{e:pmf-2}
S_X \triangleq \{ x \in \mathbb{ R } \! : P \left[ ( x - \epsilon_x, x + \epsilon_x ) \right] > 0  \textrm{ for all } \epsilon_x > 0 \} = \textit{cl} \, ( s_X^{ \, C } ) \, ,
\end{equation}
with analogous expressions for $S_Y$ and $S_{ XY }$ when $X$ and $Y$ are both continuous.

\end{lemma}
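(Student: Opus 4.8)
The plan is to establish the set equality in equation (\ref{e:pmf-2}) by the usual two-sided inclusion, working throughout with $k = 1$ (the argument for $S_Y$ is identical, and that for $S_{XY}$ is parallel up to the two-dimensional differentiation issue noted at the end). Write $S_X$ for the neighborhood-defined support in equation (\ref{e:support-2}), and recall from \textbf{Definition \ref{d:canonical-pdf-1}} that the canonical version $f_X^{\,C}$ agrees with $\frac{\partial}{\partial x} F_X$ at every point of differentiability and is set to $0$ on the exceptional set. The single fact that does the real work is that $f_X^{\,C}$ is a genuine \emph{version} of the density in the sense of \textbf{Definition \ref{d:version-collection-1}}: since $F_X$ is absolutely continuous it is differentiable $\lambda_1$-almost everywhere with $F_X(x) = \int_{-\infty}^{x} F_X'(t)\,dt$, and $f_X^{\,C}$ differs from $F_X'$ only on a $\lambda_1$-null set, so that $P[(a,b)] = F_X(b) - F_X(a) = \int_a^b f_X^{\,C}(t)\,dt$ for every interval. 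I would record this at the outset, since both inclusions rely on it.

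For the inclusion $\textit{cl}\,(s_X^{\,C}) \subseteq S_X$ I would first show $s_X^{\,C} \subseteq S_X$. Fix $x_0$ with $f_X^{\,C}(x_0) > 0$; then $F_X$ is differentiable at $x_0$ with $F_X'(x_0) > 0$, so the difference quotient is bounded below by $\tfrac{1}{2} F_X'(x_0) > 0$ for all sufficiently small increments, giving $F_X(x_0 + \epsilon) > F_X(x_0) > F_X(x_0 - \epsilon)$ and hence $P[(x_0 - \epsilon, x_0 + \epsilon)] = F_X(x_0+\epsilon) - F_X(x_0 - \epsilon) > 0$ for small $\epsilon$; monotonicity of this probability in $\epsilon$ then promotes the conclusion to \emph{all} $\epsilon > 0$, so $x_0 \in S_X$. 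Since $S_X$ is closed --- either by invoking \textbf{Lemma \ref{l:support-1}}, or directly, because any limit point $x_0$ of $S_X$ has, for each $\epsilon$, a nearby $x_1 \in S_X$ whose small neighborhood sits inside $(x_0 - \epsilon, x_0 + \epsilon)$ and already carries positive probability --- containing $s_X^{\,C}$ forces $S_X$ to contain $\textit{cl}\,(s_X^{\,C})$.

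For the reverse inclusion $S_X \subseteq \textit{cl}\,(s_X^{\,C})$ I would argue by contraposition. If $x_0 \notin \textit{cl}\,(s_X^{\,C})$, then, the complement of a closed set being open, there is a $\delta > 0$ with $(x_0 - \delta, x_0 + \delta) \cap s_X^{\,C} = \varnothing$, i.e.\ $f_X^{\,C} \equiv 0$ on that interval. Invoking the density-version fact above, $P[(x_0 - \delta, x_0 + \delta)] = \int_{x_0 - \delta}^{x_0 + \delta} f_X^{\,C}(t)\,dt = 0$, so the defining condition of $S_X$ fails at $\epsilon = \delta$ and $x_0 \notin S_X$. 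Combining the two inclusions yields $S_X = \textit{cl}\,(s_X^{\,C})$, as claimed.

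The main obstacle is not either inclusion \emph{per se} but the justification that the pointwise-defined $f_X^{\,C}$ is a legitimate density whose integral recovers interval probabilities; this is precisely where absolute continuity (rather than mere continuity, cf.\ the Cantor CDF of \textbf{Example 2}) is indispensable, and it is what prevents the pathologies of \textbf{Example 5} from recurring. For the bivariate claim the same skeleton applies, but the delicate step becomes the two-dimensional analogue --- that the canonical $f_{XY}^{\,C}$ of \textbf{Definition \ref{d:canonical-pdf-1}} recovers rectangle (and hence open-ball) probabilities by integration --- which rests on the $\lambda_2$ absolute continuity of $F_{XY}$ and the almost-everywhere existence of its mixed second partial derivative.
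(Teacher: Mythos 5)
Your proof is correct and takes essentially the same route as the paper's: a two-sided inclusion in which positivity of $f_X^{\,C}$ at a point (or at points arbitrarily nearby, for limit points) forces positive neighborhood probability, while vanishing of $f_X^{\,C}$ on a whole neighborhood forces that neighborhood to have probability zero. If anything your write-up is more careful than the paper's, since you state explicitly the fact used only implicitly there --- that the pointwise-defined canonical PDF is a legitimate density version whose integral recovers interval probabilities --- and you replace the paper's limit-point case analysis in the inclusion $\textit{cl}\,( s_X^{\,C} ) \subseteq S_X$ with the equivalent but cleaner appeal to the closedness of $S_X$.
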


\begin{proof}

One of the simplest ways to show that two sets are equal is to demonstrate that each is a subset of the other, so the proof below is in two parts: (a) $S_X \subseteq \textit{cl} \, ( s_X^{ \, C } )$ and (b) $\textit{cl} \, ( s_X^{ \, C } ) \subseteq S_X$.

\begin{itemize}

\item[(a)]

$[ S_X \subseteq \textit{cl} \, ( s_X^{ \, C } ) ]$: Choose an arbitrary $x_0 \in S_X$; then for all $\epsilon_{ x_0 } > 0$ we have that
\begin{equation} \label{e:lemma-4-1}
P [ ( x_0 - \epsilon_{ x_0 }, x_0 + \epsilon_{ x_0 } ) ] > 0 \, .
\end{equation}
The only way that the probability on the left side of equation (\ref{e:lemma-4-1}) can be positive for all $\epsilon_{ x_0 } > 0$ is either (i) $f_X^{ \, C } ( x_0 ) > 0$ or (ii) $x_0$ is a limit point of $s_X^{ \, C }$ (because, if neither of these things were true --- i.e., if $f_X^{ \, C } ( x_0 ) = 0$ and there does not exist a point $x_{ 00 }$ (different from $x_0$) in $s_X^{ \, C }$ with $f_X^{ \, C } ( x_{ 00 } ) > 0$ --- it would then follow that $P ( x_0 - \epsilon_{ x_0 }, x_0 + \epsilon_{ x_0 } ) = 0$). Thus $x_0 \in \textit{cl} \, ( s_X^{ \, C } )$.

\item[(b)]

$[ \textit{cl} \, ( s_X^{ \, C } ) \subseteq S_X ]$: Choose an arbitrary $x_0 \in \textit{cl} \, ( s_X^{ \, C } )$ and an arbitrary $\epsilon_{ x_0 } > 0$. Then either (i) $x_0 \in s_X^{ \, C }$ or (ii) $x_0$ is a limit point of $s_X^{ \, C }$.

\begin{itemize}

\item[(i)]

$x_0 \in s_X^{ \, C }$ implies that $f_X^{ \, C } ( x ) > 0$; thus it must be true that $P ( x_0 - \epsilon_{ x_0 }, x_0 + \epsilon_{ x_0 } ) > 0$, because the only way that $P ( x_0 - \epsilon_{ x_0 }, x_0 + \epsilon_{ x_0 } )$ could be 0 for all $\epsilon_{ x_0 }$ would be for $f_X^{ \, C } ( x_0 ) = 0$.

\item[(ii)]

If instead $x_0$ is a limit point of $s_X^{ \, C }$, then every set $\mathcal{ B } ( x_0, \epsilon_{ x_0 } ) = \{ x_0 - \epsilon_{ x_0 }, x_0 + \epsilon_{ x_0 } \}$ has at least one point $x_{ 00 } \in s_X^{ \, C }$ (i.e., for which $f_X^{ \,  } ( x_{ 00 } ) > 0$) with $x_{ 00 } \ne x_0$; use the argument in (i) again to complete the proof.

\end{itemize}

\end{itemize}

\end{proof}

\textbf{Remark \remark.} All well-regarded (non-measure-theoretic) introductory probability textbooks are written with each family $\{ f_X^{ \, t }, t \in \mathbb{ R } \}$ of PDFs possessed by an absolutely continuous $F_X$ represented by a single PDF. All such textbook distributions that are supported on the entire real line (e.g., Cauchy, Laplace, Logistic, Normal, and $t_\nu$ with $\nu > 1$) are canonical in the sense of \textbf{Definition \ref{d:canonical-pdf-1}}. All such textbook distributions with support on a proper subset of $\mathbb{ R }$ (e.g., Beta, Chi-Square, Exponential, Gamma, Inverse Chi-Square, Inverse Gamma, Log Logistic, Lognormal, continuous Uniform, and Weibull) are either canonical or differ from canonical at most at the one or two points defining the edges of their support; for example, the canonical PDF for the Uniform$[ 0, 1 ]$ distribution is positive only on $( 0, 1 )$, but the support is still $[ 0, 1 ]$
(what to do at the finite sets of edge points is a matter of taste, not necessity). Similar remarks apply to multivariate distributions on $\mathbb{ R }^k$.

\end{quote}

\subsection{The probabilities under independence in the continuous setting} \label{s:continuous-probabilities-1}

To examine what happens to the general definition of independence of two random variables $X$ and $Y$ in the continuous case, we offer the following result. \vspace*{-0.35in}

\begin{quote}

\begin{lemma} \label{l:pdf-factorization-1}

\textbf{\textit{(new)}} Let $F_{ XY } ( x, y )$ be an absolutely continuous CDF on $\mathbb{ R }^2$ with marginal CDFs $F_X ( x )$ and $F_Y ( y )$, possessing canonical PDF versions $f_{ XY }^{ \, C } ( x, y )$, $f_X^{ \, C } ( x )$, and $f_Y^{ \, C } ( y )$. If $X$ and $Y$ are independent in this joint CDF, then for all $( x, y ) \in \mathbb{ R }^2$
\begin{equation} \label{e:independence-pdf-factorization-1}
f_{ XY }^{ \, C } ( x, y ) = f_X^{ \, C } ( x ) \cdot f_Y^{ \, C } ( y ) \, .
\end{equation}
\mbox{ } \vspace*{-0.3in}

\end{lemma}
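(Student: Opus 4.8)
The plan is to reduce everything to the CDF via Lemma~\ref{l:alternative-independence-definition-1} and then exploit the pointwise, differentiation-based construction of the canonical PDFs in Definition~\ref{d:canonical-pdf-1}. Since $X$ and $Y$ are independent, Lemma~\ref{l:alternative-independence-definition-1} gives $F_{XY}(x,y) = F_X(x)\,F_Y(y)$ for every $(x,y)\in\mathbb{R}^2$, and the marginals $F_X,F_Y$ are absolutely continuous (as granted by the hypothesis that they possess canonical PDF versions). The whole argument then becomes a careful case analysis, at an arbitrary point $(x^*,y^*)$, of whether $F_X$ is differentiable at $x^*$ and whether $F_Y$ is differentiable at $y^*$, feeding each outcome through equations (\ref{e:canonical-pdf-1}--\ref{e:canonical-pdf-2}). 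The everywhere (not merely almost-everywhere) equality in (\ref{e:independence-pdf-factorization-1}) is exactly what the canonical construction is designed to buy us.

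First I would dispose of the easy case in which $F_X$ is differentiable at $x^*$ and $F_Y$ is differentiable at $y^*$. Here I compute the mixed partial of the product directly: differentiating $F_X(x)\,F_Y(y)$ with respect to $y$ at $y^*$ yields $F_X(x)\,f_Y^{\,C}(y^*)$ for every $x$, and differentiating this with respect to $x$ at $x^*$ yields $f_X^{\,C}(x^*)\,f_Y^{\,C}(y^*)$. Thus $F_{XY}$ is twice differentiable (once in each variable) at $(x^*,y^*)$ and, by (\ref{e:canonical-pdf-2}), $f_{XY}^{\,C}(x^*,y^*) = f_X^{\,C}(x^*)\,f_Y^{\,C}(y^*)$, matching the right-hand side of (\ref{e:independence-pdf-factorization-1}).

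The substantive work is the complementary case in which at least one marginal fails to be differentiable at the relevant coordinate. Here the right-hand side of (\ref{e:independence-pdf-factorization-1}) is automatically $0$, because the offending canonical marginal PDF is set to $0$ in (\ref{e:canonical-pdf-1}); so it suffices to show $f_{XY}^{\,C}(x^*,y^*)=0$, i.e. that $F_{XY}$ is either not twice differentiable at $(x^*,y^*)$ or has a vanishing mixed partial there. I would split according to whether $F_Y$ is differentiable at $y^*$, since the inner derivative in $\frac{\partial^2}{\partial x\,\partial y}$ is taken with respect to $y$. If $F_Y$ is differentiable at $y^*$ (so $F_X$ is the bad factor), the inner derivative equals $F_X(x)\,f_Y^{\,C}(y^*)$; if $f_Y^{\,C}(y^*)=0$ this is identically $0$ and the mixed partial is $0$, while if $f_Y^{\,C}(y^*)\neq 0$ the outer $x$-derivative would require $F_X$ to be differentiable at $x^*$, which fails, so the mixed partial does not exist. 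If instead $F_Y$ is not differentiable at $y^*$, the inner $y$-derivative of $F_X(x)\,F_Y(y)$ exists only at those $x$ for which $F_X(x)=0$; thus either $F_X$ vanishes on a full neighborhood of $x^*$ (whence the inner derivative is identically $0$ and the mixed partial is $0$) or $F_X(x)\neq 0$ at points arbitrarily close to $x^*$ (whence the inner derivative is undefined on every neighborhood of $x^*$ and the mixed partial cannot be formed). Every branch delivers $f_{XY}^{\,C}(x^*,y^*)=0$, as required.

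I expect the main obstacle to be exactly this last case: the delicate point is that a product of the form $F_X(x)\cdot(\text{a divergent difference quotient in }y)$ is genuinely well-defined and equal to $0$ precisely where $F_X(x)=0$, but is undefined elsewhere, so one must track the \emph{local} behavior of $F_X$ (which, being monotone, is either identically $0$ near $x^*$ or positive on one side of it) to decide whether the iterated partial exists at all. Care is also needed to fix the convention that, in Definition~\ref{d:canonical-pdf-1}, ``not twice differentiable'' includes the case where the iterated limit cannot even be formed because the inner partial is undefined on every neighborhood of $x^*$. With that convention in place, the two cases above exhaust all points $(x^*,y^*)$ and establish (\ref{e:independence-pdf-factorization-1}) everywhere.
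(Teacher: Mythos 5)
Your proof is correct, and it follows the same basic route as the paper's --- reduce independence to the CDF factorization $F_{XY}(x,y) = F_X(x)\,F_Y(y)$ via Lemma \ref{l:alternative-independence-definition-1}, then push that factorization pointwise through the canonical construction of Definition \ref{d:canonical-pdf-1} --- but your case decomposition is different and, in fact, more careful than the paper's. The paper splits on whether $F_{XY}$ is twice differentiable at $(x^*,y^*)$: if not, it declares both sides of (\ref{e:independence-pdf-factorization-1}) ``trivially'' zero, and if so, it differentiates the product, implicitly assuming both marginals are differentiable there. The parenthetical justification it offers --- that twice differentiability of $F_{XY}$ implies differentiability of $F_X$ and $F_Y$ --- is the wrong direction of implication, and is false as stated: if $F_Y \equiv 0$ on a neighborhood of $y^*$, the mixed partial of $F_X(x)\,F_Y(y)$ exists and equals $0$ at $(x^*,y^*)$ no matter how badly $F_X$ behaves at $x^*$. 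What the argument actually needs is the converse --- differentiability of both marginals implies twice differentiability of the product --- which is exactly your first case; and your second case (splitting on which marginal fails, tracking whether $f_Y^{\,C}(y^*)$ vanishes, and using monotonicity of $F_X$ to decide whether the inner partial is defined on a neighborhood of $x^*$ or undefined at points arbitrarily close to it) supplies precisely the analysis that the paper's ``trivially true'' dismissal glosses over. So your proposal is not merely valid; it is effectively a repaired and completed version of the paper's own proof, with the degenerate possibilities handled explicitly rather than waved away.
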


\begin{proof}

Choose an arbitrary point $( x^*, y^* ) \in \mathbb{ R }^2$; either $F_{ XY }$ is twice differentiable at this point (once in $x$, once in $y$, which would imply differentiability of $F_X$ and $F_Y$) or it's not. If not, both sides of equation (\ref{e:independence-pdf-factorization-1}) are 0 and the result is trivially true; if we \textit{do} have differentiability at $( x^*, y^* )$, by \textbf{Lemma \ref{l:alternative-independence-definition-1}} in Section \ref{s:independence-1}
\begin{eqnarray} \label{e:independence-pdf-factorization-2}
f_{ XY }^{ \, C } ( x, y ) & = & \frac{ \partial^2 }{ \partial x \, \partial y } \, F_{ XY } ( x, y ) = \frac{ \partial^2 }{ \partial x \, \partial y } \bigg[ F_X^{ \, C } ( x ) \cdot F_Y^{ \, C } ( y ) \bigg] \nonumber \\
& = & \bigg[ \frac{ \partial }{ \partial x } F_X^{ \, C } ( x ) \bigg] \cdot \bigg[ \frac{ \partial }{ \partial y } F_Y^{ \, C } ( y ) \bigg] \nonumber \\
& = & f_X^{ \, C } ( x ) \cdot f_Y^{ \, C } ( y ) \, ,
\end{eqnarray}
as desired.

\end{proof}

\end{quote}

\subsection{Our result in the continuous case} \label{s:continuous-result-1}

We can now state and prove our main result in the continuous case. \vspace*{-0.35in}
\begin{quote}

\begin{proposition} \label{p:continuous-proposition-statement-1}

\textbf{\textit{(new)}} Let $F_{ XY } ( x, y )$ be an absolutely continuous CDF on $\mathbb{ R }^2$ with support sets $\{ S_{ XY }, S_X, S_Y \}$ and canonical PDF versions $f_{ XY }^{ \, C } ( x, y )$, $f_X^{ \, C } ( x )$, and $f_Y^{ \, C } ( y )$. If $X$ and $Y$ are independent, then
\begin{equation} \label{e:continuous-support-1}
S_{ XY } = S_X \times S_Y \, .
\end{equation}
\vspace*{-0.55in}

\end{proposition}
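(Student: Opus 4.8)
The plan is to reduce the claimed set identity to a purely topological statement about closures of Cartesian products, using the two lemmas just established. Throughout I work with the canonical PDF versions, writing $s_X^{\, C} \triangleq \{ x \in \mathbb{ R } \! : f_X^{\, C}(x) > 0 \}$, $s_Y^{\, C} \triangleq \{ y \in \mathbb{ R } \! : f_Y^{\, C}(y) > 0 \}$, and $s_{ XY }^{\, C} \triangleq \{ (x,y) \in \mathbb{ R }^2 \! : f_{ XY }^{\, C}(x,y) > 0 \}$. By \textbf{Lemma \ref{l:S-X-works-not-s-X}}, each of the three support sets is exactly the closure of its raw positive-density set: $S_X = \textit{cl} \, (s_X^{\, C})$, $S_Y = \textit{cl} \, (s_Y^{\, C})$, and $S_{ XY } = \textit{cl} \, (s_{ XY }^{\, C})$. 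Hence it suffices to show that $\textit{cl} \, (s_{ XY }^{\, C}) = \textit{cl} \, (s_X^{\, C}) \times \textit{cl} \, (s_Y^{\, C})$.

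First I would factor the joint positive-density set. Because $X$ and $Y$ are independent, \textbf{Lemma \ref{l:pdf-factorization-1}} gives $f_{ XY }^{\, C}(x,y) = f_X^{\, C}(x) \cdot f_Y^{\, C}(y)$ for every $(x,y) \in \mathbb{ R }^2$, so $s_{ XY }^{\, C} = \{ (x,y) \! : f_X^{\, C}(x) \cdot f_Y^{\, C}(y) > 0 \}$. Since canonical PDFs are non-negative, \textbf{Fact $\bm{ ( * ) }$} from the proof of \textbf{Proposition \ref{p:discrete-proposition-statement-1}} applies verbatim: a product of two non-negative reals is positive iff both factors are positive. This collapses the joint condition into $f_X^{\, C}(x) > 0$ together with $f_Y^{\, C}(y) > 0$, so $s_{ XY }^{\, C} = s_X^{\, C} \times s_Y^{\, C}$, exactly as in the discrete argument.

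At this point the discrete proof was already finished, because amiability forced $S_X = s_X$ with no closures to worry about. In the continuous case the closures are unavoidable, and the entire remaining content of the proposition is concentrated in the topological identity
\begin{equation}
\textit{cl} \, (s_X^{\, C} \times s_Y^{\, C}) = \textit{cl} \, (s_X^{\, C}) \times \textit{cl} \, (s_Y^{\, C}) \, .
\end{equation}
This is the step I expect to be the main obstacle, and it is where the topological care promised in the Introduction is genuinely spent. I would prove it by double inclusion. The inclusion $\subseteq$ is soft: $\textit{cl} \, (s_X^{\, C}) \times \textit{cl} \, (s_Y^{\, C})$ is a product of closed subsets of $\mathbb{ R }$, hence closed in $\mathbb{ R }^2$, and it plainly contains $s_X^{\, C} \times s_Y^{\, C}$, so it must contain the smallest closed set doing so, namely $\textit{cl} \, (s_X^{\, C} \times s_Y^{\, C})$. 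For the reverse inclusion $\supseteq$ I would argue by sequences, which is legitimate since $\mathbb{ R }^2$ is a metric space: given $(a,b)$ with $a \in \textit{cl} \, (s_X^{\, C})$ and $b \in \textit{cl} \, (s_Y^{\, C})$, use \textbf{Definition \ref{d:closure-limit-points-1}} to pick $a_n \in s_X^{\, C}$ with $a_n \to a$ and $b_n \in s_Y^{\, C}$ with $b_n \to b$; then $(a_n, b_n) \in s_X^{\, C} \times s_Y^{\, C}$ and $(a_n, b_n) \to (a,b)$ in Euclidean distance, so $(a,b)$ lies in $\textit{cl} \, (s_X^{\, C} \times s_Y^{\, C})$.

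Chaining the displays then yields $S_{ XY } = \textit{cl} \, (s_{ XY }^{\, C}) = \textit{cl} \, (s_X^{\, C} \times s_Y^{\, C}) = \textit{cl} \, (s_X^{\, C}) \times \textit{cl} \, (s_Y^{\, C}) = S_X \times S_Y$, as desired. The one subtlety worth flagging is that the sequential characterization of closure is precisely what ties the Euclidean product topology on $\mathbb{ R }^2$ to the coordinatewise topologies on the two copies of $\mathbb{ R }$; this is what lets the support sets factor, even though the closure operation is nonlinear and need not commute with products in more exotic settings.
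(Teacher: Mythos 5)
Your proposal is correct and takes essentially the same route as the paper's own proof: both use \textbf{Lemma \ref{l:S-X-works-not-s-X}} to write all three support sets as closures of positive-density sets, then apply \textbf{Lemma \ref{l:pdf-factorization-1}} and \textbf{Fact $\bm{ ( * ) }$} to get $s_{ XY }^{ \, C } = s_X^{ \, C } \times s_Y^{ \, C }$, and finish with the identity $\textit{cl} \, ( A \times B ) = \textit{cl} \, ( A ) \times \textit{cl} \, ( B )$ for subsets of $\mathbb{ R }$. The only difference is that the paper cites this closure-of-products identity as a known basic property of the topology of $\mathbb{ R }^2$, whereas you prove it by double inclusion with a sequential argument; that added detail is correct and harmless.
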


\begin{proof}

Choose an arbitrary point $( x^*, y^* ) \in S_{ XY }$. By \textbf{Lemma \ref{l:S-X-works-not-s-X}}, $S_{ XY } = \textit{cl} \, ( s_{ XY }^{ \, C } )$; using \textbf{Lemma \ref{l:pdf-factorization-1}} and \textbf{Fact $\bm{ ( * ) }$} from the proof of \textbf{Proposition \ref{p:discrete-proposition-statement-1}},
\begin{eqnarray} \label{e:continuous-proof-1}
S_{ XY } & = & \textit{cl} \, \left[ \{ ( x^*, y^* ) \! : f_{ XY }^{ \, C } ( x^*, y^* ) > 0 \} \right] \nonumber \\
& = & \textit{cl} \, \left[ \{ ( x^*, y^* ) \! : f_X^{ \, C } ( x^* ) \cdot f_Y^{ \, C } ( y^* ) > 0 \} \right] \nonumber \\
& = & \textit{cl} \, \left[ \{ ( x^*, y^* ) \! : f_X^{ \, C } ( x^* ) > 0 \ \textrm{and} \ f_Y^{ \, C } ( y^* ) > 0 \} \right] \nonumber \\
& = & \textit{cl} \, \left[ \{ x^*\! : f_X^{ \, C } ( x^* ) > 0 \} \times \{ y^* \! : f_Y^{ \, C } ( y^* ) > 0 \} \right] \, .
\end{eqnarray}
Now, by a basic property of the closure operation in topological spaces (specialized to the setting in this paper), in which $\textit{cl} \, ( A \times B ) = \textit{cl} \, ( A ) \times \textit{cl} \, ( B )$ for any subsets $A$ and $B$ of $\mathbb{ R }^1$, we obtain finally that
\begin{eqnarray} \label{e:continuous-proof-2}
S_{ XY } = \textit{cl} \, \left[ \{ x^*\! : f_X^{ \, C } ( x^* ) > 0 \} \right] \times \textit{cl} \, \left[ \{ y^* \! : f_Y^{ \, C } ( y^* ) > 0 \} \right] = S_X \times S_Y \, .
\end{eqnarray}
Since the point $( x^*, y^* )$ was arbitrary, the proof is complete.

\end{proof}

\textbf{Remark \remark.} In parallel with the discrete case, in which \textbf{Proposition \ref{p:discrete-proposition-2}} offered a conditional version of 
\textbf{Proposition \ref{p:discrete-proposition-statement-1}}, and having defined conditional support sets in the continuous case appropriately (details omitted), an analogous conditional version of \textbf{Proposition \ref{p:continuous-proposition-statement-1}} is available, as follows (proof omitted). \vspace*{0.1in}

\begin{proposition} \label{p:continuous-conditional-proposition-1}

\textbf{\textit{(new)}} Let $S_{ ( X \given Y = y ) }$ and $S_{ ( Y \given X = x ) }$ be the conditional support sets under the same conditions as in \textbf{Proposition \ref{p:continuous-proposition-statement-1}}. If $X$ and $Y$ are independent, then
\begin{equation} \label{e:discrete-support-3}
\mathrm{for \ all} \ y \in \mathbb{ R }, \ S_{ ( X \given Y = y ) } = S_X \ \ \ \mathrm{and} \ \ \ \mathrm{for \ all} \ x \in \mathbb{ R }, \ S_{ ( Y \given X = x ) } = S_Y  \, .
\end{equation}

\end{proposition}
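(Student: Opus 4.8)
The plan is to first make precise the conditional support sets that the statement refers to (the excerpt defers their definition) and then to reduce the entire proposition to the factorization already supplied by \textbf{Lemma \ref{l:pdf-factorization-1}}. Mirroring the discrete definition in equation (\ref{e:conditional-support-1}), I would define, for any $x$ with $f_X^{\,C}(x) > 0$, the canonical conditional PDF of $Y$ given $(X = x)$ by $f_{\,Y \given X}^{\,C}(y \given x) \triangleq f_{XY}^{\,C}(x,y) \, / \, f_X^{\,C}(x)$, and then set
\[
S_{(Y \given X = x)} \triangleq \textit{cl}\,\bigl( \{ y \in \mathbb{R} : f_{\,Y \given X}^{\,C}(y \given x) > 0 \} \bigr),
\]
with $S_{(X \given Y = y)}$ defined analogously. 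Exactly as in the discrete \textbf{Proposition \ref{p:discrete-proposition-2}}, the phrase ``for all $x$'' must be read as ranging over the $x$ for which conditioning is meaningful, namely $x \in s_X^{\,C}$ (this coincides with $S_X$ except possibly on the $\lambda_1$-null set of edge points where $f_X^{\,C}$ vanishes); where there is no distribution to condition on the claim is vacuous.

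With this definition in hand the argument is short. Fix any $x$ with $f_X^{\,C}(x) > 0$. By \textbf{Lemma \ref{l:pdf-factorization-1}}, independence gives $f_{XY}^{\,C}(x,y) = f_X^{\,C}(x) \cdot f_Y^{\,C}(y)$ for every $y$, so the factor $f_X^{\,C}(x)$ cancels in the ratio defining the conditional PDF and
\[
f_{\,Y \given X}^{\,C}(y \given x) = f_Y^{\,C}(y) \quad \text{for all } y \in \mathbb{R} .
\]
Hence the positive-density set $\{ y : f_{\,Y \given X}^{\,C}(y \given x) > 0 \}$ coincides with $s_Y^{\,C} = \{ y : f_Y^{\,C}(y) > 0 \}$, independently of the conditioning value $x$. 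Taking closures and invoking \textbf{Lemma \ref{l:S-X-works-not-s-X}} (which identifies $\textit{cl}\,(s_Y^{\,C})$ with $S_Y$) yields $S_{(Y \given X = x)} = \textit{cl}\,(s_Y^{\,C}) = S_Y$. The companion identity $S_{(X \given Y = y)} = S_X$ follows by the symmetric computation with the roles of $X$ and $Y$ interchanged, completing the proof.

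I expect the only genuine obstacle to be the definitional one in the first paragraph, not any computation: making the conditional support well-defined in the continuous case requires committing to the canonical versions, so that $f_{\,Y \given X}^{\,C}(y \given x)$ is an unambiguous pointwise ratio rather than an equivalence class defined only up to $\lambda_1$-null sets, and it requires restricting the quantifier to $x \in s_X^{\,C}$, since conditioning on $(X = x)$ is conditioning on a $\lambda_1$-null event and is meaningful only where $f_X^{\,C}(x) > 0$. Once those two conventions are fixed --- precisely what the canonical-PDF machinery of \textbf{Definition \ref{d:canonical-pdf-1}} was introduced to license --- the cancellation and closure steps are routine, paralleling the trivial discrete argument behind \textbf{Proposition \ref{p:discrete-proposition-2}}.
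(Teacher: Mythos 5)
The paper itself offers no proof of this proposition --- the conditional support sets are ``defined\ldots appropriately (details omitted)'' and the proof is explicitly omitted --- so your proposal supplies exactly what the authors left out, and it is correct: defining $S_{ ( Y \given X = x ) }$ as the closure of the positive set of the canonical conditional density $f_{ \, Y \given X }^{ \, C } ( y \given x ) = f_{ XY }^{ \, C } ( x, y ) / f_X^{ \, C } ( x )$, then combining \textbf{Lemma \ref{l:pdf-factorization-1}} (factorization under independence), cancellation of $f_X^{ \, C } ( x )$, and \textbf{Lemma \ref{l:S-X-works-not-s-X}} (which gives $\textit{cl} \, ( s_Y^{ \, C } ) = S_Y$) is precisely the continuous analogue of the omitted ``trivial'' discrete argument behind \textbf{Proposition \ref{p:discrete-proposition-2}}, and is the route the paper's structure clearly intends. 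One parenthetical claim is inaccurate but harmless: $s_X^{ \, C }$ need not agree with $S_X$ up to a $\lambda_1$-null set in full generality (e.g., for a density that is the normalized indicator of the open complement in $[ 0, 1 ]$ of a fat Cantor set, $\textit{cl} \, ( s_X^{ \, C } ) \setminus s_X^{ \, C }$ has positive Lebesgue measure); your proof only needs the restriction of the quantifier ``for all $x$'' to $x \in s_X^{ \, C }$, which is the right convention and mirrors the discrete case.
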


\end{quote}

\section{The general case} \label{s:general-case-1}

Now, finally, consider the setting in which $X$ and $Y$ are arbitrary random variables, to which \textbf{Definitions \ref{d:independence-1}--\ref{d:support-1}} and \textbf{Lemma \ref{l:support-1}} apply. \vspace*{-0.35in}

\begin{quote}

\begin{theorem} \label{t:general-theorem-statement-1}

\textbf{\textit{(new)}} Let $X$ and $Y$ be random variables with marginal support sets $S_X$ and $S_Y$, respectively, and with bivariate support set $S_{ XY }$ (in all three instances using the user-friendly version of the definition of support in \textbf{Lemma \ref{l:support-1}} based on neighborhoods). If $X$ and $Y$ are independent, then
\begin{equation} \label{e:general-theorem-1}
S_{ XY } = S_X \times S_Y \, .
\end{equation}

\end{theorem}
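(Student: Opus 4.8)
The plan is to prove the set equality (\ref{e:general-theorem-1}) by establishing the two inclusions $S_{XY} \subseteq S_X \times S_Y$ and $S_X \times S_Y \subseteq S_{XY}$ separately, working throughout with the neighborhood characterization of support from \textbf{Lemma \ref{l:support-1}} and \textbf{Definition \ref{d:support-4}} rather than with any PMF or PDF, since in this fully general setting neither need exist. A pleasant feature of the neighborhood characterization is that the three support sets are automatically closed, so --- in contrast to Sections \ref{s:discrete-case-1}--\ref{s:continuous-case-1} --- no closure operation and none of its attendant complications enter; in particular $S_X \times S_Y$ is already closed, being a product of closed sets.

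For the inclusion $S_{XY} \subseteq S_X \times S_Y$ I would argue purely geometrically and, notably, \emph{without} invoking independence. Fix $(x_0, y_0) \in S_{XY}$ and an arbitrary $\epsilon > 0$. Since the open ball $\mathcal{B}[(x_0,y_0),\epsilon]$ projects into the interval $(x_0 - \epsilon, x_0 + \epsilon)$ in its first coordinate, it is contained in the vertical slab $(x_0 - \epsilon, x_0 + \epsilon) \times \mathbb{R}$, and monotonicity of $P$ gives
\begin{equation*}
0 < P \big\{ \mathcal{B}[(x_0,y_0),\epsilon] \big\} \le P \big[ (x_0 - \epsilon, x_0 + \epsilon) \times \mathbb{R} \big] = P \big( X \in (x_0 - \epsilon, x_0 + \epsilon) \big) \, .
\end{equation*}
As $\epsilon > 0$ was arbitrary, equation (\ref{e:support-2}) yields $x_0 \in S_X$; the symmetric slab in the second coordinate gives $y_0 \in S_Y$, so $(x_0, y_0) \in S_X \times S_Y$.

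For the reverse inclusion $S_X \times S_Y \subseteq S_{XY}$ --- the \emph{only} place independence is used --- fix $(x_0, y_0)$ with $x_0 \in S_X$ and $y_0 \in S_Y$, and fix $\epsilon > 0$. The key geometric step runs in the opposite direction: the ball $\mathcal{B}[(x_0,y_0),\epsilon]$ now \emph{contains} an inscribed open square $(x_0 - \delta, x_0 + \delta) \times (y_0 - \delta, y_0 + \delta)$ with $\delta = \epsilon / \sqrt{2}$. Applying \textbf{Definition \ref{d:independence-1}} to the Borel sets $B_1 = (x_0 - \delta, x_0 + \delta)$ and $B_2 = (y_0 - \delta, y_0 + \delta)$, the probability of this square factors as $P(X \in B_1) \cdot P(Y \in B_2)$, and each factor is strictly positive precisely because $x_0 \in S_X$ and $y_0 \in S_Y$. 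Monotonicity of $P$ then forces $P \{ \mathcal{B}[(x_0,y_0),\epsilon] \} > 0$, and since $\epsilon$ was arbitrary, equation (\ref{e:support-1}) gives $(x_0, y_0) \in S_{XY}$.

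The main obstacle, such as it is, is the mismatch between the circular neighborhoods used to define $S_{XY}$ and the rectangular product sets on which \textbf{Definition \ref{d:independence-1}} delivers factorization; the two sandwiching steps --- ball-inside-slab for one inclusion and square-inside-ball for the other --- are designed exactly to bridge this gap. I expect no deeper difficulty: once the geometry is arranged the argument is short, and it transparently exposes the logical structure, namely that $S_{XY} \subseteq S_X \times S_Y$ holds for \emph{any} joint distribution, while independence is what supplies the reverse inclusion.
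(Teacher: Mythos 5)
Your proof is correct, and its skeleton --- two inclusions, each handled by sandwiching a Euclidean ball against a coordinate box so that factorization over product sets can be exploited --- is the same as the paper's; your square-inside-ball step for the reverse inclusion is, up to the choice of radii, exactly the paper's rectangle-inside-ball step. The one genuine difference lies in the forward inclusion $S_{XY} \subseteq S_X \times S_Y$: the paper encloses the ball in a rectangle, factors the rectangle's probability using independence, and then applies \textbf{Fact $\bm{ ( * ) }$} from \textbf{Proposition \ref{p:discrete-proposition-statement-1}}, so independence is invoked in \emph{both} halves of its argument; you instead enclose the ball in the slab $( x_0 - \epsilon, x_0 + \epsilon ) \times \mathbb{ R }$, whose probability is \emph{identically} the marginal probability $P \big( X \in ( x_0 - \epsilon, x_0 + \epsilon ) \big)$, so no factorization and no positivity fact are needed there at all. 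This buys a sharper logical statement than the theorem itself records: the inclusion $S_{XY} \subseteq S_X \times S_Y$ holds for \emph{every} joint distribution on $\mathbb{ R }^2$, and independence is needed only for the reverse inclusion $S_X \times S_Y \subseteq S_{XY}$ --- exactly as you observe. A pleasant corollary of your arrangement, invisible in the paper's proof, is that dependence can only ever manifest as $S_{XY}$ being a \emph{proper subset} of $S_X \times S_Y$ (as in the darts example of Section \ref{s:intuition-1}), never a superset. The paper's version is stylistically more symmetric between the two halves, but yours isolates more cleanly where the hypothesis of independence actually does work.
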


\begin{proof}

As in \textbf{Lemma \ref{l:S-X-works-not-s-X}} above, we show that $S_{ XY } = S_X \times S_Y$ in two steps: $S_{ XY } \subseteq ( S_X \times S_Y )$ and $( S_X \times S_Y ) \subseteq S_{ XY }$. 

\begin{itemize}

\item

$[ S_{ XY } \subseteq ( S_X \times S_Y ) ]$: Let $( x, y ) \in S_{ XY }$. To show that $x \in S_X$ and $y \in S_Y$, let $( x - \epsilon_x, x + \epsilon_x )$ and $( y - \epsilon_y, y + \epsilon_y )$ be intervals centered at $x$ and $y$, respectively, with arbitrary $\epsilon_x > 0$ and $\epsilon_y > 0$. The Cartesian product of these two intervals defines a rectangle in $\mathbb{ R }^2$ of the form
\begin{equation} \label{e:rectangle-1}
\mathcal{ R } [ ( x, y ), ( \epsilon_x, \epsilon_y ) ] \triangleq ( x - \epsilon_x, x + \epsilon_x ) \times ( y - \epsilon_y, y + \epsilon_y ) \, ,
\end{equation}
which \textit{contains} the bivariate ball $\mathcal{ B } [ ( x, y ), \epsilon^* ]$ with $\epsilon^* = \min ( \epsilon_x, \epsilon_y ) > 0$. Then
\begin{equation} \label{e:general-theorem-2}
P \big\{ ( X, Y ) \in \mathcal{ R } [ ( x, y ), ( \epsilon_x, \epsilon_y ) ] \big\} \ge P \big\{ ( X, Y ) \in \mathcal{ B } [ ( x, y ), \epsilon^* ] \big\} > 0 \, ,
\end{equation}
in which the second inequality follows from \textbf{Lemma \ref{l:support-1}}. But since $X$ and $Y$ are independent, 
\begin{eqnarray} \label{e:general-theorem-3}
P \big\{ ( X, Y ) \in \mathcal{ R } [ ( x, y ), ( \epsilon_x, \epsilon_y ) ] \big\} & = & P [ X \in ( x - \epsilon_x, x + \epsilon_x ) ] \cdot \nonumber \\
& & \hspace*{0.25in} P [ Y \in ( y - \epsilon_y, y + \epsilon_y ) ] > 0 \, ,
\end{eqnarray}
and now by \textbf{Fact $\bm{ ( * ) }$} in the proof of \textbf{Proposition \ref{p:discrete-proposition-statement-1}} it follows that
\begin{equation} \label{e:general-theorem-4}
P [ X \in ( x - \epsilon_x, x + \epsilon_x ) ] > 0 \ \ \ \mathrm{and} \ \ \ P [ Y \in ( y - \epsilon_y, y + \epsilon_y ) ] > 0 \, ,
\end{equation}
meaning (since $\epsilon_x$ and $\epsilon_y$ are arbitrary positive real numbers) that $x \in S_X$ and $y \in S_Y$, as was to be shown in this step of the proof.

\item

$[ ( S_X \times S_Y ) \subseteq S_{ XY } ]$: Let $( x, y ) \in ( S_X \times S_Y )$, so that $x \in S_X$ and $y \in S_Y$. Construct the rectangle $\mathcal{ R } [ ( x, y ), ( \epsilon_x, \epsilon_y ) ]$ as in the first case (again with arbitrary $\epsilon_x > 0$ and $\epsilon_y > 0$), but this time build the ball $\mathcal{ B } [ ( x, y ), \epsilon^{ ** } ]$ with $\epsilon^{ ** } = \sqrt{ \epsilon_x^2 + \epsilon_y^2 } > 0$ big enough so that the rectangle lies \textit{inside} the ball, yielding $\mathcal{ B } [ ( x, y ), \epsilon^{ ** } ] \supseteq [ ( x - \epsilon_x, x + \epsilon_x ) \times ( y - \epsilon_y, y + \epsilon_y ) ]$. Then
\begin{eqnarray} \label{e:general-theorem-5}
P \big\{ ( X, Y ) \in \mathcal{ B } [ ( x, y ), \epsilon^{ ** } ] \big\} & \ge & P \big\{ ( X, Y ) \in \mathcal{ R } [ ( x, y ), ( \epsilon_x, \epsilon_y ) ] \big\} \nonumber \\
& = & P \big[X \in ( x - \epsilon_x, x + \epsilon_x ) \ \mathrm{and} \nonumber \\
& & \hspace*{0.25in} Y \in ( y - \epsilon_y, y + \epsilon_y ) \big] \, ;
\end{eqnarray}
but by independence (\ref{e:general-theorem-5}) becomes
\begin{eqnarray} \label{e:general-theorem-6}
P \big\{ ( X, Y ) \in \mathcal{ B } [ ( x, y ), \epsilon^{ ** } ] \big\} & \ge & P \big[ X \in ( x - \epsilon_x, x + \epsilon_x ) \big] \cdot \nonumber \\
& & \hspace*{0.25in} P \big[ Y \in ( y - \epsilon_y, y + \epsilon_y ) \big] \, .
\end{eqnarray}
Now, since (a) $\epsilon_x$ and $\epsilon_y$ are arbitrary positive real numbers and (b) by assumption $x \in S_X$ and $y \in S_Y$, it follows that both of $P \big[ X \in ( x - \epsilon_x, x + \epsilon_x ) \big]$ and $P \big[ Y \in ( y - \epsilon_y, y + \epsilon_y ) \big]$ are positive, meaning that $P \big\{ ( X, Y ) \in \mathcal{ B } [ ( x, y ), \epsilon^{ ** } ] \big\} > 0$, i.e., $( x, y ) \in S_{ XY }$ as desired.

\end{itemize}

\end{proof}

\textbf{Remark \remark.} With reference to the \textit{Lebesgue Decomposition Theorem} on the CDF scale in equation (\ref{e:lebesgue-decomposition-1}), the general setting in this Section of the paper corresponds to \textit{any} $\bm{ \pi }$ with $0 \le \pi_i \le 1$ (for $( i = 1, 2, 3 )$) and $\sum_{ i = 1}^3 \pi_i = 1$.

\textbf{Remark \remark.} The generality of this result covers the interesting \textit{mixed} case in which one of the two random variables in a bivariate distribution is discrete and the other one is continuous (for which $\pi_1$ and $\pi_2$ are both in $( 0, 1 )$, summing to 1, and $\pi_3 = 0$). For example, in a simple Bayesian setting we might have $X \sim \textrm{Beta} ( \alpha, \beta )$ (with $\alpha > 0$ and $\beta > 0$) and $( Y \given X ) \sim \textrm{Bernoulli} ( X )$; the joint PMF/PDF would then be $( pf )_{ XY } ( x, y ) = c \, x^{ ( \alpha + y ) - 1 } ( 1 - x )^{ ( \beta + 1 - y ) - 1 }$ for $[ 0 \le x \le 1$ and $y \in \{ 0, 1 \} ]$ and 0 otherwise (with $c > 0$ as a normalizing constant), and the marginal PMF for $Y$ would then be Bernoulli$\left( \frac{ \alpha }{ \alpha + \beta } \right)$.

\end{quote}

\section{Examples} \label{s:examples-1}

We now offer three applications of our basic result.

\begin{itemize}

\item

\textbf{Example \example \ ((Necessary) $<$ (Necessary and Sufficient) with PDFs).} Consider the following bivariate PDF:
\begin{equation} \label{e:addition-not-multiplication}
f_{ XY } ( x, y ) = \left\{ \begin{array}{ccc} c \, [ g ( x ) + h ( y ) ] & \textrm{if} & ( x, y ) \in [ 0, 1 ] \times [ 0, 1 ] \\ 0 & & \textrm{otherwise} \end{array} \right\} \, ,
\end{equation}
in which (a) $g \! : \! [ 0, 1 ] \rightarrow ( 0, \infty )$ and $h \! : \! [ 0, 1 ] \rightarrow ( 0, \infty )$ are both integrable, strictly positive on their common domain $[ 0, 1 ]$, and non-constant and (b) $c$ is chosen to make $f_{ XY }$ integrate to 1; the simplest such example is perhaps the diamond- or kite-shaped PDF $f_{ XY } ( x, y ) = ( x + y )$ on the unit square, as displayed in Figure \ref{f:example-8}. Here, for all $g ( \cdot )$ and $h ( \cdot )$ as specified above, $S_X = S_Y = [ 0, 1 ]$ and $S_{ XY } = S_X \times S_Y$, so the necessary condition in \textbf{Theorem \ref{t:general-theorem-statement-1}} and \textbf{Proposition \ref{p:continuous-proposition-statement-1}} is satisfied, but the \textit{additive} nature of the $f_{ XY }$ in equation (\ref{e:addition-not-multiplication}) ensures that the joint PDF does not factor into the \textit{product} of the marginals; in the specific example in Figure \ref{f:example-8}, $f_X ( x ) = x + \frac{ 1 }{ 2 }$ on $[ 0, 1 ]$, $f_Y ( y ) = y + \frac{ 1 }{ 2 }$ on the same support, and $\left( x + \frac{ 1 }{ 2 }\right) \cdot \left( y + \frac{ 1 }{ 2 }\right) \ne ( x + y )$. Here our result does not offer a short-cut in assessing independence, because (Necessary) $<$ (Necessary and Sufficient).

\begin{figure}[t!]

\centering

\caption{\textit{A perspective plot of the bivariate PDF in \textbf{Example 7} (higher PDF values in yellow)}.}

\label{f:example-8}

\bigskip

\includegraphics[ scale = 0.6 ]{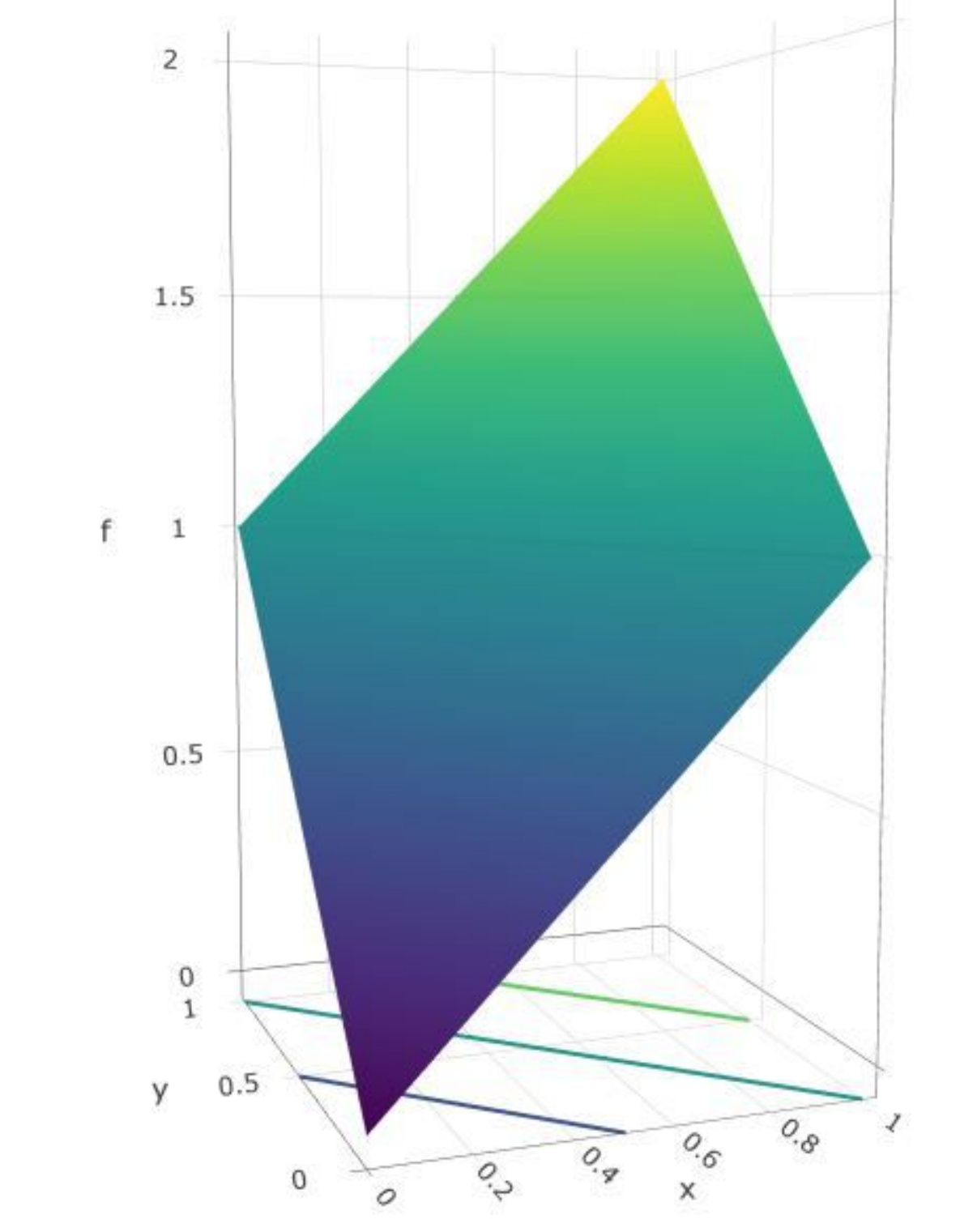}

\end{figure}

\item

\textbf{\textbf{Example \example \ ((Necessary) $<$ (Necessary and Sufficient) with PMFs).}} Imagine making $n = 2$ random draws $( X, Y )$ (with $X$ as the first draw) from a finite population $\mathcal{ P } = \{ v_1, \, \dots, \, v_N \}$, in which $N \ge 3$ is a finite integer and the $v_i$ are real numbers (for $( i = 1, \, \dots, \, N )$); without loss of generality we may take $N = 3$ and $\mathcal{ P } = \{ 4, 5, 7 \}$ for illustration. Consider $P_{ IID } \big[ ( X = 7 ) \ \mathrm{and} \ ( Y = 7 ) \big]$ and $P_{ SRS } \big[ ( X = 7 ) \ \mathrm{and} \ ( Y = 7 ) \big]$, in which IID and SRS are (\textit{independent identically distributed sampling}) and (\textit{simple random sampling}), respectively. Under IID sampling, $S_X = S_Y = \{ 4, 5, 7 \}$ and $S_{ XY }$ is as in the left display in Table \ref{t:iid-srs-1}; under SRS, $S_X$ and $S_Y$ are again both equal to $\{ 4, 5, 7 \}$ and $S_{ XY }$ is summarized in the right display in Table \ref{t:iid-srs-1}. In both cases it's clear that $S_{ XY } = S_X \times S_Y$, so the necessary condition for independence in \textbf{Theorem \ref{t:general-theorem-statement-1}} and \textbf{Proposition \ref{p:discrete-proposition-statement-1}} is met; but $X$ and $Y$ are independent under IID sampling and dependent with SRS, as is clear by inspection of Table \ref{t:iid-srs-1}. Once again, as in \textbf{Example 7}, (Necessary) is weaker than (Necessary and Sufficient).

\begin{table}[t!]

\centering

\caption{\textit{Bivariate sample spaces in \textbf{Example 8}. Entries in left display: $S_{ XY }$ with IID sampling; entries in right display: $S_{ XY }$ with SRS. In both tables, the margins specify $S_X$ (rows) and $S_Y$ (columns); {\small $\bigotimes$} means that the corresponding ordered pair is not possible.}} 

\bigskip

\begin{tabular}{cc|c|c|c|ccc|c|c|c|}

\multicolumn{5}{c}{\textbf{IID}} & & \multicolumn{5}{c}{\textbf{SRS}} \\ \cline{1-5} \cline{7-11}

& \multicolumn{1}{c}{} & \multicolumn{3}{c}{$y$} & & & \multicolumn{1}{c}{} & \multicolumn{3}{c}{$y$} \\

& \multicolumn{1}{c}{} & \multicolumn{1}{c}{4} & \multicolumn{1}{c}{5} & \multicolumn{1}{c}{7} & \multicolumn{1}{c}{} & & \multicolumn{1}{c}{} & \multicolumn{1}{c}{4} & \multicolumn{1}{c}{5} & \multicolumn{1}{c}{7} \\ \cline{3-5} \cline{9-11}

& 4 & $( 4, 4 )$ & $( 4, 5 )$ & $( 4, 7 )$ & & & 4 & $\bigotimes$ & $( 4, 5 )$ & $( 4, 7 )$ \\ \cline{3-5} \cline{9-11}

$x$ & 5 & $( 5, 4 )$ & $( 5, 5 )$ & $( 5, 7 )$ & \mbox{\hspace*{0.6in}} & $x$ & 5 & $( 5, 4 )$ & $\bigotimes$ & $( 5, 7 )$ \\ \cline{3-5} \cline{9-11}

& 7 & $( 7, 4 )$ & $( 7, 5 )$ & $( 7, 7 )$ & & & 7 & $( 7, 4 )$ & $( 7, 5 )$ & $\bigotimes$ \\ \cline{3-5} \cline{9-11}

\end{tabular}

\label{t:iid-srs-1}

\end{table}

\begin{figure}[t!]

\centering

\caption{\textit{Support sets in \textbf{Example 9}. Left panel: bivariate support of $( X_1, X_2 )$ (inside and boundary of green unit square), with $S_{ X_1 }$ and $S_{ X_2 }$ in red; right panel: bivariate support of $( Y_1, Y_2 )$ (boundary and region between green curve and horizontal axis), with $S_{ Y_1 } \times S_{ Y_2 }$ (half-open rectangle) in red}.}

\label{f:example-9}

\begin{tabular}{cc}

\begin{tabular}{c}

\includegraphics[ scale = 0.4 ]{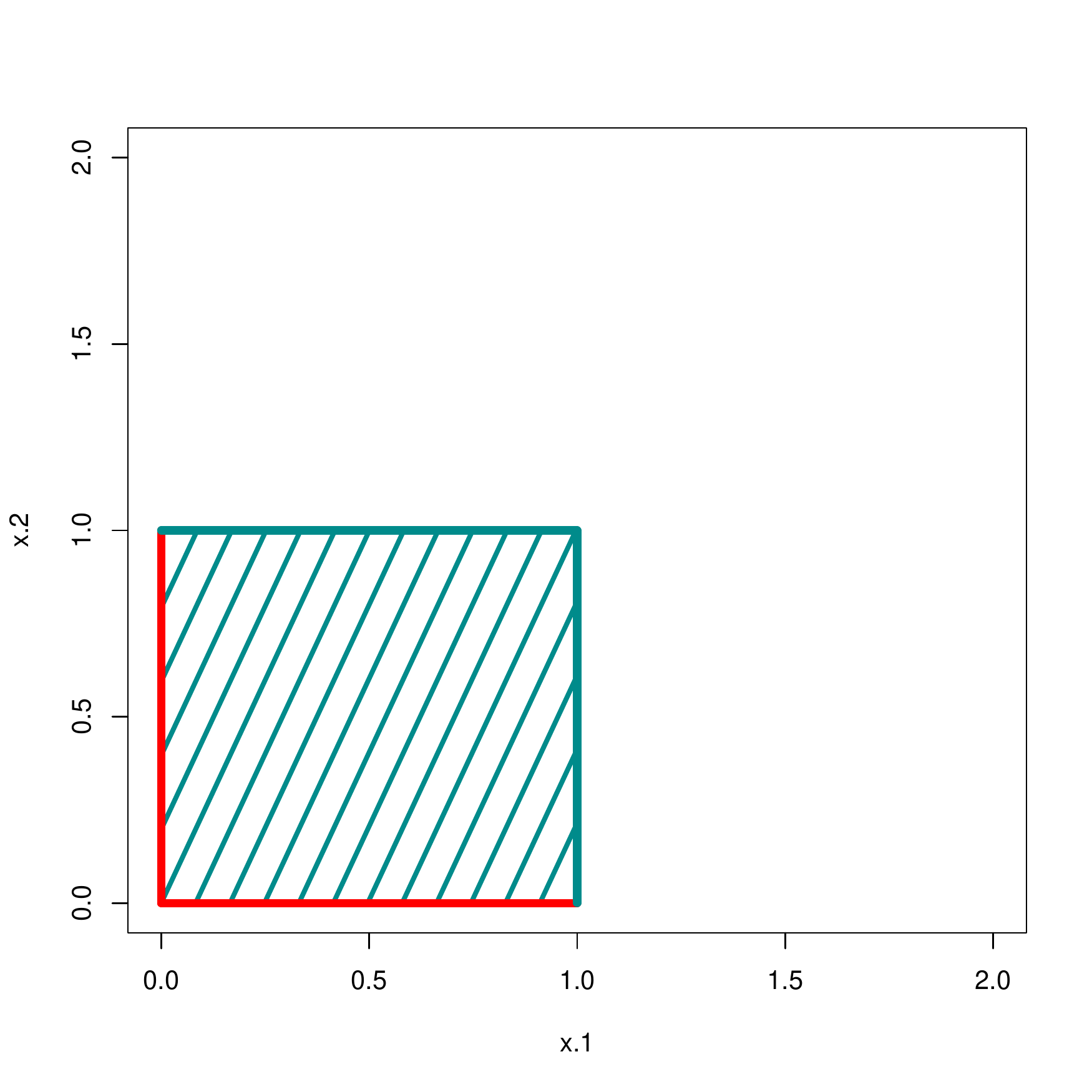} \\

\end{tabular}

&

\begin{tabular}{c}

\includegraphics[ scale = 0.4 ]{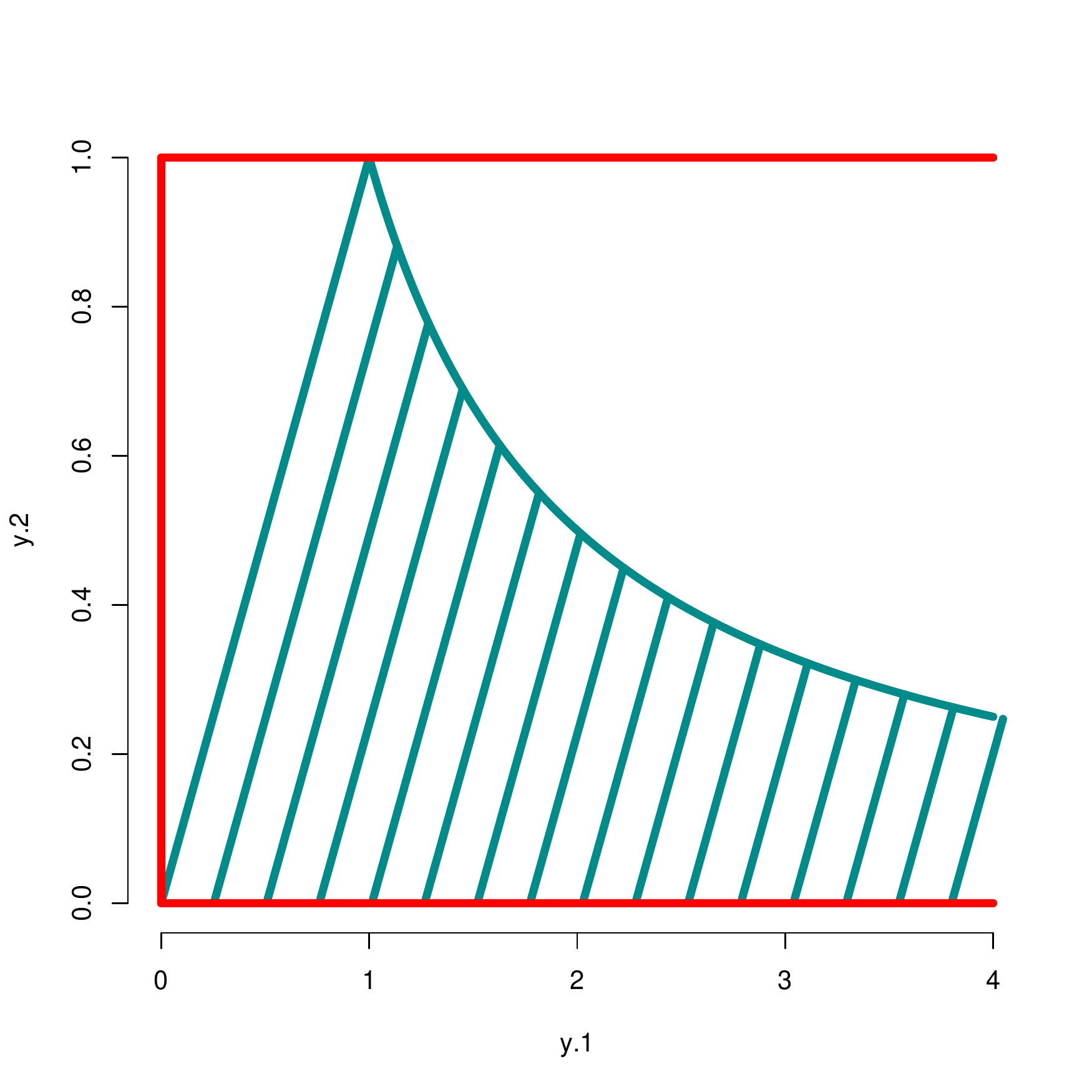}

\end{tabular}

\end{tabular}

\end{figure}

\item

\textbf{Example \example \ \textit{(\cite{degroot-schervish-2012}, p.~183).}} In a departure from the previous notation in the paper, let $( X_1, X_2 )$ have the following joint PDF:
\begin{equation} \label{e:avoid-the-jacobian-1}
f_{ X_1 X_2 } ( x_1, x_2 ) = \left\{ \begin{array}{ccc} 4 \, x_1 \, x_2 & \textrm{if} & ( x_1, x_2 ) \in [ 0, 1 ] \times [ 0, 1 ] \\ 0 & & \textrm{otherwise} \end{array} \right\} \, .
\end{equation}
$X_1$ and $X_2$ are clearly independent in this joint PDF, with marginal PDFs $f_{ X_i } ( x_i ) = 2 \, x_i$ on $S_{ X_1 } = S_{ X_2 } = [ 0, 1 ]$ for $( i = 1, 2 )$ (the left panel of Figure \ref{f:example-9} illustrates the support sets for $( X_1, X_2 )$). Consider the interesting transformation under which $( Y_1, Y_2 ) = \big[ h_1 ( X_1, X_2 ), h_2 ( X_1, X_2 ) \big] \triangleq \left( \frac{ X_1 }{ X_2 }, X_1 \cdot X_2 \right)$; intuitively $Y_1$ and $Y_2$ must be dependent, for example because they share the common multiplicative factor $X_1$ (interestingly, the correlation between $Y_1$ and $Y_2$ is about $-0.13$, when intuition might have suggested a positive association; the reason is the long right tail for large values of $Y_1$ in Figure \ref{f:example-9-perspective-plot}). It's a matter of some tedium to demonstrate this in the usual way, because (a) you need to compute the Jacobian matrix of the inverse transformation to get the joint PDF of $( Y_1, Y_2 )$ and then (b) you have to extract the marginals for the $Y_i$; our method also requires some attention to detail but may arguably be less tedious, as follows. Considering the realized values $( x_1, x_2 )$ and $( y_1, y_2 )$ of $( X_1, X_2 )$ and $( Y_1, Y_2 )$, respectively, a brief calculation reveals that the inverse transformation is given by $\left[ h_1^{ -1 } ( y_1, y_2 ), h_2^{ -1 } ( y_1, y_2 ) \right] = ( x_1, x_2 ) \triangleq \left[ \sqrt{ y_2 \cdot y_1 }, \sqrt{ \frac{ y_2 }{ y_1 } } \right]$. Solving the system of inequalities $\{ x_1 > 0, x_1 < 1, x_2 > 0, x_2 < 1 \}$ in the new coordinates $( y_1, y_2 )$ yields the set $s_{ Y_1 Y_2 }$ in the right panel of Figure \ref{f:example-9} (the oddly-shaped region on and below the green piecewise curve), which is the transformed image of the unit square $s_{ X_1 X_2 }$ in the left panel of that Figure; here $S_{ Y_1 } = [ 0, \infty )$ and $S_{ Y_2 } = [ 0, 1 ]$. Their Cartesian product (the half-open rectangle outlined in red in the right panel) is clearly unequal to $S_{ Y_1 Y_2 }$, i.e., our \textbf{Theorem \ref{t:general-theorem-statement-1}} and \textbf{Proposition \ref{p:continuous-proposition-statement-1}} demonstrate dependence of $Y_1$ and $Y_2$. We conclude this slightly strange example with a perspective plot (Figure \ref{f:example-9-perspective-plot}) of the more-than-slightly-strange joint PDF in the $( y_1, y_2 )$ coordinate system. \vspace*{-0.1in}

\end{itemize}

\begin{figure}[t!]

\centering

\caption{\textit{A perspective plot of the bivariate PDF in \textbf{Example 9} (higher PDF values in yellow)}.}

\label{f:example-9-perspective-plot}

\bigskip

\includegraphics[ scale = 0.6 ]{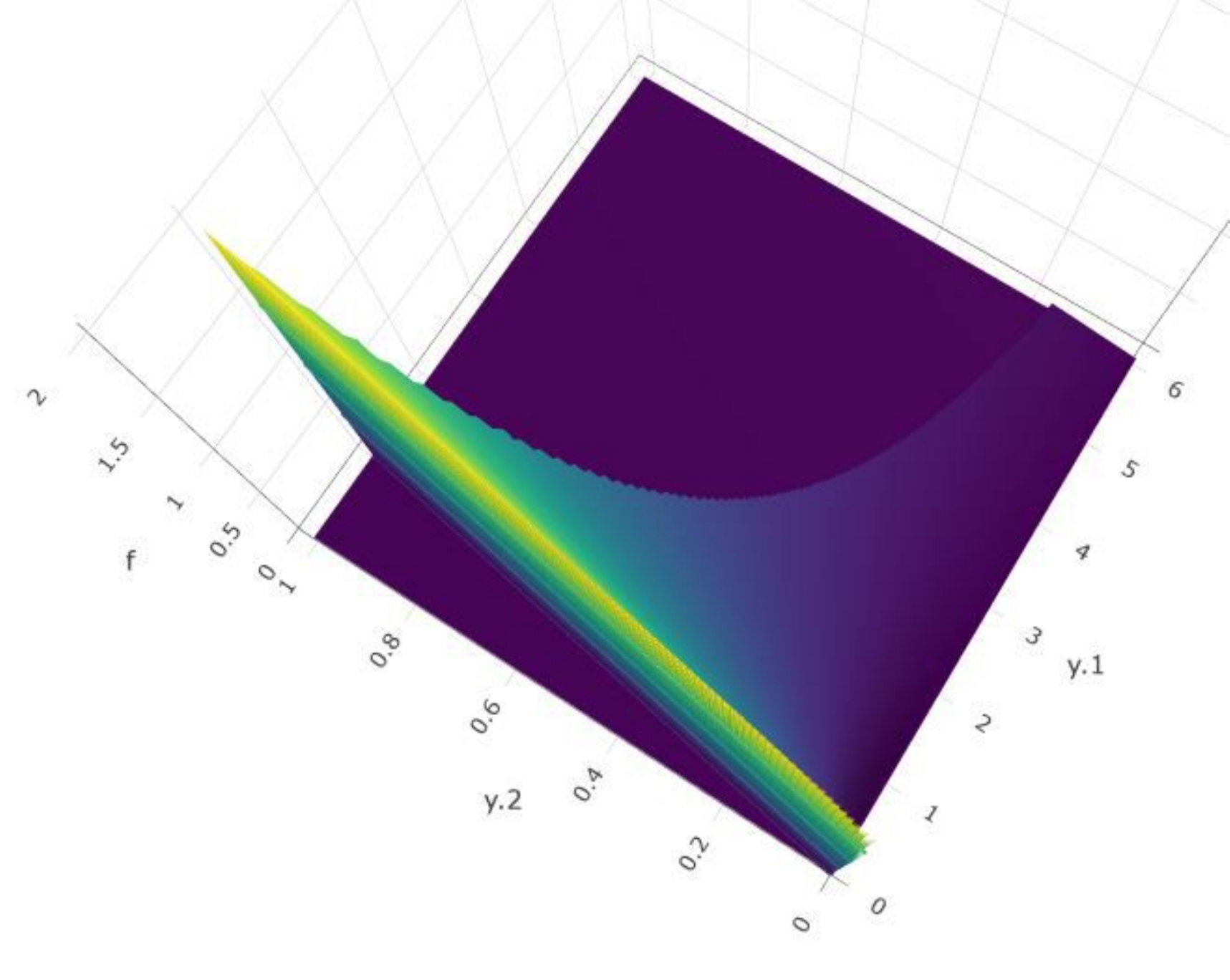}

\end{figure}

\section{Discussion} \label{s:discussion-1}

The environment in which two real-valued random variables $X$ and $Y$ are independent is replete with factorization: under independence, the joint CDF factors into the product of the marginal CDFs, and the same is true on the PMF and (with some care in stating the result) PDF scales. In this paper we add yet another scale with analogous behavior: if $X$ and $Y$ are independent, the joint support set factors into the product of the marginal support sets. The contrapositive of this result offers a simple necessary condition for independence: if the joint support does not correctly factor, the random variables \textit{must} be dependent. This will in some cases ease the burden of proof when exploring independence, as the examples in Section \ref{s:examples-1} illustrate. It has been an interesting journey, crucially involving simple ideas in measure theory and topology, to demonstrate this basic probabilistic finding. \vspace*{-0.1in}

\section*{Appendix}

Here we present a proof sketch that supports our main finding at a high level of abstraction, based on suggestions from Terenin (2021, personal communication) and definitions and theorems from \cite{kallenberg-2021}, abbreviated $\mathbb{ K }$ in what follows; also see \cite{williams-1991} for a deeply and highly usefully intuitive account of the fundamental measure theory and topology needed here. Familiarity with the following topics is assumed in this Appendix: measurable function, measurable space, probability space, product measure, product topology, and topological space.

\begin{itemize}

\item

Start with an arbitrary probability space $( \Omega, \mathcal{ F }, P )$, and consider 
a measurable function $X$ that takes $\Omega$ into a measurable space $( S, \mathcal{ S } )$; $\mathbb{ K }$ calls $X$ a \textit{random element \underline{in}} $S$. By the definition of measurable functions, for any set $B \in \mathcal{ S }$ we can speak meaningfully (a) of the set $X^{ -1 } ( B ) \triangleq \{ \omega \in \Omega \! : X ( \omega ) \in B \}$ and (b) of the derived probabilities $P [ X^{ -1 } ( B ) ] = [ P \circ X^{ -1 } ] ( B )$. As $\mathbb{ K }$ observes, the set function $\mathcal{ L } ( X ) \triangleq P \circ X^{ -1 } \triangleq \mu_X$ is a probability measure on $S$, which may be termed the \textit{distribution} of $X$; $\mathbb{ K }$ differs from ordinary usage in reserving the term \textit{random variable} only for those situations in which $S = \mathbb{ R }$. 

\item

To get two or more independent random elements up and running, $\mathbb{ K }$ establishes the following results. \vspace*{-0.35in}

\begin{quote}

\begin{lemma} \label{l:kallenberg-product-measure-1}

\textbf{(Kallenberg (2021))} Let $( S, \mathcal{ S }, \mu )$ and $( T, \mathcal{ T }, \nu )$ be $\sigma$-finite measure spaces. Then there exists a unique (product) measure $( \mu \otimes \nu )$ on $( S \times T, \mathcal{ S } \otimes \mathcal{ T } )$ such that
\begin{equation} \label{e:product-measure-1}
( \mu \otimes \nu ) ( B \times C ) = [ \mu ( B ) ] \cdot [ \nu ( C ) ] \ \ \ \textrm{for all} \ \ \ B \in \mathcal{ S } \textrm{ and } C \in \mathcal{ T } \, .
\end{equation} 

\end{lemma}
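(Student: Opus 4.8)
The plan is to handle the two assertions --- existence and uniqueness --- separately, since they rely on different machinery. For uniqueness I would invoke Dynkin's $\pi$--$\lambda$ theorem. The measurable rectangles $\{ B \times C : B \in \mathcal{S}, \ C \in \mathcal{T} \}$ form a $\pi$-system (they are closed under finite intersection, since $(B_1 \times C_1) \cap (B_2 \times C_2) = (B_1 \cap B_2) \times (C_1 \cap C_2)$), and by definition they generate the product $\sigma$-algebra $\mathcal{S} \otimes \mathcal{T}$. Any two measures that agree on this $\pi$-system must agree on the whole $\sigma$-algebra it generates, provided they are $\sigma$-finite along it; $\sigma$-finiteness of $\mu$ and $\nu$ lets me choose increasing sequences $S_m \uparrow S$ and $T_n \uparrow T$ of finite-measure sets, so the rectangles $S_m \times T_n$ exhaust $S \times T$ with finite product measure --- exactly the hypothesis the $\pi$--$\lambda$ theorem requires. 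This disposes of uniqueness.

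For existence I would construct the measure explicitly by integrating sections. For $E \in \mathcal{S} \otimes \mathcal{T}$ and $s \in S$ write $E_s \triangleq \{ t \in T : (s,t) \in E \}$ for the $s$-section, and set $(\mu \otimes \nu)(E) \triangleq \int_S \nu(E_s) \, d\mu(s)$. On a rectangle one checks directly that $(B \times C)_s = C$ when $s \in B$ and $\varnothing$ otherwise, so $\nu((B \times C)_s) = \nu(C) \, \mathbf{1}_B(s)$ and the integral returns $\mu(B) \cdot \nu(C)$, giving the required factorization. Countable additivity then follows from additivity of sections together with monotone convergence: for disjoint $\{ E_k \}$ the section of $\bigcup_k E_k$ is the disjoint union $\bigcup_k (E_k)_s$, and interchanging the resulting sum with the integral over $S$ is licensed by the monotone convergence theorem.

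The hard part --- and the only step requiring real care --- is showing that $s \mapsto \nu(E_s)$ is a well-defined $\mathcal{S}$-measurable function for every $E \in \mathcal{S} \otimes \mathcal{T}$, so that the defining integral even makes sense. I would prove this by a monotone-class (good-sets) argument, first reducing to finite $\nu$: using $\sigma$-finiteness, partition $T$ into finite-measure pieces $T_n$ and write $\nu(E_s) = \sum_n \nu\big( (E \cap (S \times T_n))_s \big)$, so it suffices to treat each finite piece and then sum. With $\nu$ finite, let $\mathcal{G}$ be the collection of $E \in \mathcal{S} \otimes \mathcal{T}$ for which $s \mapsto \nu(E_s)$ is measurable. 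One verifies that $\mathcal{G}$ contains every measurable rectangle and is a $\lambda$-system: closure under complements uses finiteness of $\nu$ via $\nu((E^c)_s) = \nu(T) - \nu(E_s)$, and closure under increasing countable unions uses continuity from below plus the stability of measurability under pointwise limits. Since the rectangles form a $\pi$-system generating $\mathcal{S} \otimes \mathcal{T}$, the $\pi$--$\lambda$ theorem forces $\mathcal{G} = \mathcal{S} \otimes \mathcal{T}$, and reassembling the $\sigma$-finite pieces finishes the measurability claim. With measurability in hand the construction above delivers a measure with the stated rectangle property, and the uniqueness argument guarantees it is the only one.
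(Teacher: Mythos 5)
Your proposal is correct, but note the context: the paper never proves this lemma --- it is stated in the Appendix as a quoted result from Kallenberg (2021), serving as imported machinery for the abstract proof sketch of \textbf{Theorem \ref{t:our-result-abstractly-1}}. So there is no in-paper argument to compare against; what you have supplied is the standard construction that the citation points to, and it is sound. Your uniqueness step (measurable rectangles form a $\pi$-system generating $\mathcal{S} \otimes \mathcal{T}$; two measures agreeing on a generating $\pi$-system and $\sigma$-finite along an exhausting sequence $S_n \times T_n$ from that $\pi$-system must coincide) and your existence step (define $( \mu \otimes \nu ) ( E ) \triangleq \int_S \nu ( E_s ) \, d \mu ( s )$, establish measurability of $s \mapsto \nu ( E_s )$ by a good-sets Dynkin argument after reducing to finite $\nu$, then get countable additivity from monotone convergence) are exactly the classical route found in Kallenberg's own text and in standard references such as Billingsley or Folland. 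One presentational refinement: before you can write $\nu ( E_s )$ at all, you need to know that each section $E_s$ lies in $\mathcal{T}$; this is itself a small good-sets argument (the collection of $E$ with all sections measurable is a $\sigma$-algebra containing the rectangles), or you can fold it into your class $\mathcal{G}$ by defining $\mathcal{G}$ as the collection of $E$ for which every section $E_s$ is in $\mathcal{T}$ \emph{and} $s \mapsto \nu ( E_s )$ is $\mathcal{S}$-measurable --- the $\lambda$-system verification goes through verbatim. With that wording fixed, your argument is complete and self-contained.
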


\textbf{Remark \remark.} This result extends with no new ideas to $n$ $\sigma$-finite measure spaces for all finite integers $n \ge 1$.

\begin{lemma} \label{l:kallenberg-independence-1}

\textbf{(Kallenberg (2021))} Let $\{ X_1, \dots, X_n \}$ be random elements with distributions $\{ \mu_1, \dots, \mu_n \}$, respectively, in some measurable spaces $\{ ( S_1, \mathcal{ S }_1 ), \dots,$ $( S_n, \mathcal{ S }_n ) \}$. Then the $X_i$ are independent iff $\bm{ X } = ( X_1, \dots, X_n )$ has distribution $( \mu_1 \otimes \dots \otimes \mu_n )$.

\end{lemma}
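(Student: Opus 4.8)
The plan is to work entirely at the level of the joint law $\mathcal{ L } ( \bm{ X } ) = P \circ \bm{ X }^{ -1 } \triangleq \mu_{ \bm{ X } }$, a probability measure on the product space $( S_1 \times \dots \times S_n, \, \mathcal{ S }_1 \otimes \dots \otimes \mathcal{ S }_n )$, and to compare it against the product measure $\mu_1 \otimes \dots \otimes \mu_n$, which exists and is unique by \textbf{Lemma \ref{l:kallenberg-product-measure-1}} (applicable because each $\mu_i$ is a probability measure, hence finite, hence $\sigma$-finite) together with the Remark extending that result to $n$ factors. The key observation is that both independence of the $X_i$ and the identity $\mu_{ \bm{ X } } = \mu_1 \otimes \dots \otimes \mu_n$ are, at heart, statements about \emph{measurable rectangles} $B_1 \times \dots \times B_n$ with $B_i \in \mathcal{ S }_i$. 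First I would record the two elementary identities that hold on any such rectangle: by the definition of the induced law,
\begin{equation}
\mu_{ \bm{ X } } ( B_1 \times \dots \times B_n ) = P ( X_1 \in B_1, \, \dots, \, X_n \in B_n ) \, ,
\end{equation}
while the defining property of the product measure in equation (\ref{e:product-measure-1}) gives $( \mu_1 \otimes \dots \otimes \mu_n ) ( B_1 \times \dots \times B_n ) = \prod_{ i = 1 }^n \mu_i ( B_i ) = \prod_{ i = 1 }^n P ( X_i \in B_i )$.

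With this set-up the two directions are short. For the reverse implication, I would assume $\mu_{ \bm{ X } } = \mu_1 \otimes \dots \otimes \mu_n$, evaluate both measures on an arbitrary rectangle, and read off from the two identities above that $P ( X_1 \in B_1, \, \dots, \, X_n \in B_n ) = \prod_{ i = 1 }^n P ( X_i \in B_i )$, which is precisely the definition of independence of the random elements. For the forward implication, I would assume independence; the same two identities then show that $\mu_{ \bm{ X } }$ and $\mu_1 \otimes \dots \otimes \mu_n$ \emph{agree on every measurable rectangle}. All that remains is to upgrade agreement on rectangles to agreement on the entire product $\sigma$-algebra.

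This last upgrade is the one genuinely non-trivial step, and I expect it to be the main obstacle. The measurable rectangles form a $\pi$-system (they are closed under finite intersection, since $( B_1 \times \dots \times B_n ) \cap ( C_1 \times \dots \times C_n ) = ( B_1 \cap C_1 ) \times \dots \times ( B_n \cap C_n )$) and by definition they generate $\mathcal{ S }_1 \otimes \dots \otimes \mathcal{ S }_n$. Two probability measures that agree on a generating $\pi$-system agree on the whole $\sigma$-algebra, by the $\pi$-$\lambda$ (Dynkin) theorem; equivalently, one may simply invoke the \emph{uniqueness} half of \textbf{Lemma \ref{l:kallenberg-product-measure-1}}, since $\mu_{ \bm{ X } }$ has now been shown to satisfy the very rectangle identity (\ref{e:product-measure-1}) that characterizes $\mu_1 \otimes \dots \otimes \mu_n$. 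Either route yields $\mu_{ \bm{ X } } = \mu_1 \otimes \dots \otimes \mu_n$, completing the forward direction. Finally, although the argument handles general $n$ directly, one could alternatively reduce to the two-factor case and induct on $n$, associating the first $n - 1$ coordinates into a single random element in the measurable space $( S_1 \times \dots \times S_{ n - 1 }, \, \mathcal{ S }_1 \otimes \dots \otimes \mathcal{ S }_{ n - 1 } )$; the inductive step introduces no new ideas beyond the $\pi$-system uniqueness already used.
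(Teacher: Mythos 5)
The paper never proves this lemma: it is quoted from Kallenberg (2021) as background machinery for the Appendix, so there is no in-paper proof to compare yours against. Your argument is correct and is the standard (indeed, Kallenberg's own) route: both independence and the identity $\mu_{ \bm{ X } } = \mu_1 \otimes \dots \otimes \mu_n$ are rectangle statements, and agreement on measurable rectangles upgrades to agreement on all of $\mathcal{ S }_1 \otimes \dots \otimes \mathcal{ S }_n$ because the rectangles form a generating $\pi$-system --- equivalently, by the uniqueness clause of \textbf{Lemma \ref{l:kallenberg-product-measure-1}}, which applies since probability measures are trivially $\sigma$-finite. You correctly isolated the one non-trivial step (the $\pi$-$\lambda$ upgrade) rather than treating rectangle agreement as if it were the whole conclusion, so there is no gap to report.
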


\end{quote}

\item

$\mathbb{ K }$'s general definition of support is as follows. \vspace*{-0.35in}

\begin{quote}

\begin{definition}

\textbf{\textit{(Kallenberg (2021))}} For any measure $\mu$ on a topological space $S$, the \textit{support} $S_\mu$ of $\mu$ is the set of points $s \in S$ such that $\mu ( B ) > 0$ for every neighborhood $B$ of $s$.

\end{definition}

\textbf{Remark \remark.} This matches the first result in Billingsley's \textbf{Lemma \ref{l:support-1}} in Section \ref{s:support-sets-1}, when specialized to $\mathbb{ R }^k$.

\end{quote}

\item

We can now state our basic result at this level of abstraction. \vspace*{-0.35in}

\begin{quote}

\begin{theorem} \label{t:our-result-abstractly-1}

\textbf{(new)} Let $\{ X_1, \dots, X_n \}$ be random elements with distributions $\{ \mu_1, \dots, \mu_n \}$, respectively, in some measurable spaces $\bm{ S } = \{ ( S_1, \mathcal{ S }_1 ), \dots, ( S_n, \mathcal{ S }_n ) \}$, in which $\bm{ S }$ is equipped with the product topology. If the $X_i$ are independent, then the support of the product measure $\bm{ \mu } \triangleq ( \mu_1 \otimes \dots \otimes \mu_n )$ must factor:
\begin{equation} \label{e:our-result-abstractly-1}
S_{ \bm{ \mu } } = \left( S_{ \mu_1 } \times ... \times S_{ \mu_n } \right) \, .
\end{equation}

\end{theorem}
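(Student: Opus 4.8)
The plan is to reduce independence to a statement about product measures and then mirror the two-inclusion argument of \textbf{Theorem \ref{t:general-theorem-statement-1}}, but in a setting where the topology does most of the work for us. By \textbf{Lemma \ref{l:kallenberg-independence-1}}, the independence of $\{ X_1, \dots, X_n \}$ is equivalent to the joint distribution $\bm{ \mu }$ being exactly the product measure $( \mu_1 \otimes \dots \otimes \mu_n )$, so it suffices to show that the support of a product measure factors. I would establish the inclusions $S_{ \bm{ \mu } } \subseteq ( S_{ \mu_1 } \times \dots \times S_{ \mu_n } )$ and $( S_{ \mu_1 } \times \dots \times S_{ \mu_n } ) \subseteq S_{ \bm{ \mu } }$ separately. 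The structural fact I would lean on is that, by the definition of the product topology, the boxes $U_1 \times \dots \times U_n$ with each $U_i$ open in $S_i$ form a base; on such boxes the defining relation of the product measure (\textbf{Lemma \ref{l:kallenberg-product-measure-1}}, extended to $n$ factors) gives $\bm{ \mu } ( U_1 \times \dots \times U_n ) = \prod_{ i = 1 }^n \mu_i ( U_i )$. This is exactly what makes the abstract proof cleaner than the $\mathbb{ R }^k$ version: the ball-versus-rectangle juggling (the $\epsilon^*$/$\epsilon^{ ** }$ comparison) of \textbf{Theorem \ref{t:general-theorem-statement-1}} evaporates, because here the basic open neighborhoods \emph{are} the boxes.

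For $S_{ \bm{ \mu } } \subseteq ( S_{ \mu_1 } \times \dots \times S_{ \mu_n } )$, I would take an arbitrary $s = ( s_1, \dots, s_n ) \in S_{ \bm{ \mu } }$, fix a coordinate $i$ and an arbitrary open neighborhood $U_i$ of $s_i$, and form the box $B = S_1 \times \dots \times U_i \times \dots \times S_n$, taking the full space in every coordinate but $i$. This $B$ is an open neighborhood of $s$, so Kallenberg's support definition forces $\bm{ \mu } ( B ) > 0$; the product formula then collapses $B$ to $\bm{ \mu } ( B ) = \mu_i ( U_i ) \cdot \prod_{ j \ne i } \mu_j ( S_j ) = \mu_i ( U_i )$, since each $\mu_j$ is a probability measure with $\mu_j ( S_j ) = 1$. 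As $U_i$ ranges over all neighborhoods of $s_i$, this says $s_i \in S_{ \mu_i }$, and since $i$ was arbitrary, $s \in ( S_{ \mu_1 } \times \dots \times S_{ \mu_n } )$.

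For the reverse inclusion I would take $s$ with $s_i \in S_{ \mu_i }$ for every $i$ and verify the support condition at $s$ directly. Any neighborhood of $s$ contains a basic box $U_1 \times \dots \times U_n$ with each $U_i$ an open neighborhood of $s_i$, so by monotonicity of measure it is enough to bound the measure of this box below. Factorization gives $\bm{ \mu } ( U_1 \times \dots \times U_n ) = \prod_{ i = 1 }^n \mu_i ( U_i )$, and each factor is positive because $s_i \in S_{ \mu_i }$; the obvious $n$-fold extension of \textbf{Fact $\bm{ ( * ) }$} from the proof of \textbf{Proposition \ref{p:discrete-proposition-statement-1}} then makes the product positive. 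Hence every neighborhood of $s$ has positive $\bm{ \mu }$-measure, so $s \in S_{ \bm{ \mu } }$, completing the second inclusion.

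The only genuine subtlety — and the one I expect to be the main obstacle, precisely the sort of detail the authors note is obscured at this level of abstraction — is measurability. The factorization in \textbf{Lemma \ref{l:kallenberg-product-measure-1}} is stated for measurable rectangles in the product $\sigma$-algebra $\mathcal{ S }_1 \otimes \dots \otimes \mathcal{ S }_n$, whereas the support is defined through the product \emph{topology}; one must confirm that the open boxes used above are in fact measurable and that the product $\sigma$-algebra is compatible with the Borel structure of the product topology. I would dispose of this by assuming each $\mathcal{ S }_i$ contains the open sets of $S_i$, so that every open $U_i$ lies in $\mathcal{ S }_i$ and each box is a measurable rectangle — automatic in the $\mathbb{ R }^k$ specialization, where $\mathcal{ S }_i = \mathbb{ B }^1$. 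With that in hand, the reduction to basic box neighborhoods and the repeated use of $\mu_i ( S_i ) = 1$ are routine, and the two inclusions together give $S_{ \bm{ \mu } } = ( S_{ \mu_1 } \times \dots \times S_{ \mu_n } )$.
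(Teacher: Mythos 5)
Your proof is correct, and it follows the paper's two-inclusion skeleton, but the way you handle neighborhoods is genuinely different from --- and cleaner than --- the paper's own sketch. The paper's proof of \textbf{Theorem \ref{t:our-result-abstractly-1}} simply asserts that the argument of \textbf{Theorem \ref{t:general-theorem-statement-1}} carries over: for each inclusion one builds both a Cartesian box and a neighborhood ball around the chosen point, with one containing the other, and then applies independence and \textbf{Fact $\bm{(*)}$}. You observe instead that in the (finite) product topology the boxes themselves form a base of neighborhoods, so the ball-versus-rectangle comparison (the $\epsilon^*$/$\epsilon^{**}$ device) evaporates; moreover, your first inclusion uses cylinder sets $S_1 \times \dots \times U_i \times \dots \times S_n$, collapsing the product measure to the single factor $\mu_i ( U_i )$ via $\mu_j ( S_j ) = 1$, which dispenses with \textbf{Fact $\bm{(*)}$} entirely in that direction. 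This is more than cosmetic: the paper's sketch speaks of ``balls'' in an arbitrary topological space, where no metric is available, so your version is the one that actually makes sense at this level of abstraction. Likewise, your explicit appeal to \textbf{Lemma \ref{l:kallenberg-independence-1}} to replace the joint law by the product measure, and your flagged hypothesis that each $\mathcal{ S }_i$ contains the open sets of $S_i$ (so the boxes are measurable rectangles and the support condition is meaningful for the product $\sigma$-algebra), make precise two steps the paper leaves implicit. Two small corrections: in the second inclusion what you actually need is the trivial converse of \textbf{Fact $\bm{(*)}$} --- a product of finitely many positive numbers is positive --- not the fact itself; and in that same step, monotonicity should be applied after noting that it suffices to verify positivity on basic open (hence measurable) neighborhoods, since a general neighborhood need not lie in the product $\sigma$-algebra.
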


\begin{proof}

\textbf{\textit{(sketch)}} The \textit{exact} same proof as in \textbf{Theorem \ref{t:general-theorem-statement-1}} works here with the obvious necessary modifications: 

\begin{itemize}

\item[(1)] 

Show that the two sets in equation (\ref{e:our-result-abstractly-1}) are equal by showing (a) that $S_{ \bm{ \mu } } \subseteq \left( S_{ \mu_1 } \times ... \times S_{ \mu_n } \right)$ and (b) that $\left( S_{ \mu_1 } \times ... \times S_{ \mu_n } \right) \subseteq S_{ \bm{ \mu } }$; 

\item[(2)] 

For (1)(a), pick an arbitrary point $\bm{ p_1 } \in S_{ \bm{ \mu } }$ and build both a (Cartesian product) box and a (neighborhood) ball around $\bm{ p_1 }$ such that the box contains the ball; use independence, $\mathbb{ K }$'s support definition and \textbf{Fact $\bm{ ( * ) }$} from \textbf{Proposition \ref{p:discrete-proposition-statement-1}} to conclude that $\bm{ p_1 } \in \left( S_{ \mu_1 } \times ... \times S_{ \mu_n } \right)$; and

\item[(3)]

For (1)(b), pick an arbitrary point $\bm{ p_2 } \in \left( S_{ \mu_1 } \times ... \times S_{ \mu_n } \right)$ and again build both a (Cartesian product) box and a (neighborhood) ball around $\bm{ p_2 }$, but this time such that the ball contains the box; again use independence, $\mathbb{ K }$'s support definition and \textbf{Fact $\bm{ ( * ) }$} from \textbf{Proposition \ref{p:discrete-proposition-statement-1}} to conclude that $\bm{ p_2 } \in S_{ \bm{ \mu } }$.

\end{itemize}

\end{proof}

\end{quote}

\end{itemize}

\small

\section*{Acknowledgments}

We're grateful to John Kolassa, Alex Terenin, and David Williams for helpful references and comments. Membership on this list does not constitute agreement with the views presented here, nor are any of these people responsible for any errors that may remain.

\bibliography{independence-support}

\end{document}